\documentclass[a4paper,11pt]{amsart}

\usepackage[margin=3cm]{geometry}
\usepackage{amsmath,amsthm,amssymb,mathtools,xcolor,enumitem,hyperref,%
            tikz}
\usepackage[scr]{rsfso}
\usepackage[british]{babel}

\usetikzlibrary{fadings}
\tikzfading[name=fade right,left color=transparent!0,right color=transparent!100]
\tikzfading[name=fade left,right color=transparent!0,left color=transparent!100]
\tikzfading[name=fade top,bottom color=transparent!0,top color=transparent!100]
\tikzfading[name=fade bottom,top color=transparent!0,bottom color=transparent!100]

\theoremstyle{definition}
\newtheorem{defn}{Definition}[section]
\newtheorem{ex}[defn]{Example}
\newtheorem*{ack}{Acknowledgements}
\theoremstyle{plain}
\newtheorem{lem}[defn]{Lemma}
\newtheorem{prop}[defn]{Proposition}
\newtheorem{thm}[defn]{Theorem}

\theoremstyle{remark}
\newtheorem{rmk}[defn]{Remark}

\counterwithin{equation}{section}
\setlist[enumerate]{label=\textup{(\roman*)}}

\newcommand{\Q}{\mathbb{Q}}
\newcommand{\R}{\mathbb{R}}
\newcommand{\Z}{\mathbb{Z}}
\newcommand{\Sb}{\mathbb{S}}

\DeclareMathOperator{\Comm}{Comm}

\DeclareMathOperator{\Sym}{Sym}
\DeclareMathOperator{\Cay}{Cay}

\makeatletter
\@namedef{subjclassname@2020}{%
  \textup{2020} Mathematics Subject Classification}
\makeatother

\title{Commensurators of abelian subgroups of biautomatic groups}
\author{Motiejus Valiunas}
\address{Instytut Matematyczny, Uniwersytet Wroc{\l}awski, pl.\ Grunwaldzki 2/4, 50-384 Wroc{\l}aw, Poland}
\email{valiunas@math.uni.wroc.pl}
\subjclass[2020]{20F10, 20E06}
\keywords{Biautomatic groups, commensurators, Leary--Minasyan groups}

\begin{document}

\begin{abstract}
We show that the commensurator of any finitely generated abelian subgroup $H$ in a biautomatic group centralises a finite-index subgroup of $H$. We deduce that the CAT(0) groups introduced by Leary--Minasyan \cite{leary-minasyan} are either biautomatic or cannot arise as subgroups of biautomatic groups, answering a question posed in \cite{leary-minasyan} and generalising an analogous result for Baumslag--Solitar groups. These are the first examples of CAT(0) groups that are not subgroups of biautomatic groups.
\end{abstract}
\maketitle

\section{Introduction}

The theory of automatic and biautomatic groups was developed in the 1980s, and is explored in a book by D.~B.~A.~Epstein et al.\ \cite{epstein}. Roughly speaking, a group $G$ is biautomatic if it can be equipped with a regular set of normal forms, in such a way that paths starting and ending at neighbouring vertices in a Cayley graph of $G$ fellow-travel; see Section~\ref{ssec:biauto} for a precise definition. Biautomaticity implies various geometric and algorithmic properties: for instance, a biautomatic group is finitely presented, satisfies a quadratic isoperimetric inequality, has solvable conjugacy problem, and finitely generated abelian subgroups of biautomatic groups are undistorted.

There has been substantial interest in biautomaticity of various classes of non-positively curved groups. In particular, word-hyperbolic groups \cite{cannon} and cubulated groups \cite{niblo-reeves}---that is, groups acting geometrically and cellularly on CAT(0) cube complexes---are biautomatic; more generally, Helly groups (introduced recently in \cite{ccgho}) are biautomatic. Artin groups of finite type \cite{charney92} and central extensions of word-hyperbolic groups \cite{neumann-reeves} are also biautomatic. For several decades, it has been an open question whether or not all CAT(0) groups---groups acting geometrically on CAT(0) spaces---are biautomatic. However, I.~J.~Leary and A.~Minasyan have recently \cite{leary-minasyan} constructed examples of CAT(0) groups that are not biautomatic.

More precisely, the paper \cite{leary-minasyan} studies commensurating HNN-extensions of $\Z^n$ (called \emph{Leary--Minasyan groups} in this paper), defined for a matrix $A \in GL_n(\Q)$ and a finite-index subgroup $L \leq \Z^n \cap A^{-1}(\Z^n)$ by the presentation
\[
G(A,L) = \langle x_1,\ldots,x_n,t \mid x_ix_j = x_jx_i \text{ for } 1 \leq i < j \leq n, t\mathbf{x}^{\mathbf{v}}t^{-1} = \mathbf{x}^{A\mathbf{v}} \text{ for } \mathbf{v} \in L \rangle,
\]
where we write $\mathbf{x}^{\mathbf{w}} := x_1^{w_1} \cdots x_n^{w_n}$ for any $\mathbf{w} = (w_1,\ldots,w_n) \in \Z^n$. It was shown \cite[Theorem~1.1]{leary-minasyan} that such a group $G(A,L)$ is CAT(0) if and only if $A$ is conjugate in $GL_n(\R)$ to an orthogonal matrix, and biautomatic if and only if $A$ has finite order. Thus, such groups provide first examples of CAT(0) groups that are not biautomatic.

A special case of Leary--Minasyan groups for $n = 1$ are the \emph{Baumslag--Solitar groups}, defined for $p,q \in \Z \setminus \{0\}$ by the presentation $BS(p,q) = \langle x,t \mid tx^pt^{-1} = x^q \rangle$. It is well-known that $BS(p,q)$ is biautomatic if $|p| = |q|$ (because it is cubulated, for instance), and that $BS(p,q)$ cannot be embedded in a biautomatic group if $|p| \neq |q|$ \cite[Corollary~6.8]{gersten-short}. This motivated a question \cite[Question~12.2]{leary-minasyan}, suggested by K.-U.\ Bux, which asks if a similar dichotomy is true for arbitrary Leary--Minasyan groups. We settle this question in the affirmative: that is, $G(A,L)$ is either biautomatic or not embeddable into a biautomatic group.

\begin{thm} \label{thm:cor}
Let $A \in GL_n(\Q)$, and let $L$ be a finite-index subgroup of $\Z^n \cap A^{-1}(\Z^n)$. Then $G(A,L)$ is a subgroup of a biautomatic group if and only if $A$ has finite order. In particular, there exist CAT(0) groups that are not embeddable into biautomatic groups.
\end{thm}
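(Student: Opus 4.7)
The forward direction is immediate from \cite[Theorem~1.1]{leary-minasyan}: if $A$ has finite order then $G(A,L)$ is itself biautomatic, hence a subgroup of a biautomatic group. So the task is the converse.

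The plan is to reduce the problem to a general statement about commensurators of abelian subgroups of biautomatic groups. Inside $G(A,L)$, set $H := \langle x_1,\ldots,x_n \rangle \cong \Z^n$. The defining relations give $tLt^{-1} = A(L) \leq H$, and both $L$ and $A(L)$ have finite index in $H$, so $t \in \Comm_{G(A,L)}(H)$; moreover, the $\Q$-linear automorphism of $H \otimes \Q$ induced by conjugation by $t$ is exactly $A$. Hence if $\iota : G(A,L) \hookrightarrow \widehat{G}$ with $\widehat{G}$ biautomatic, then $\iota(H)$ is still a finitely generated abelian subgroup of $\widehat{G}$ and $\iota(t) \in \Comm_{\widehat{G}}(\iota(H))$ still induces the automorphism $A$ on $\iota(H) \otimes \Q$.

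Thus it suffices to prove the following commensurator result, which would be the technical core of the paper: \emph{if $\widehat{G}$ is biautomatic and $H \leq \widehat{G}$ is a finitely generated abelian subgroup, then the conjugation homomorphism $\Comm_{\widehat{G}}(H) \to \Aut(H \otimes \Q)$ has finite image.} Granting this, $A$ lies in a finite subgroup of $GL_n(\Q)$, so $A$ has finite order, completing the converse. The ``in particular'' clause then follows by choosing $A$ of infinite order conjugate in $GL_n(\R)$ to an orthogonal matrix (as already produced in \cite{leary-minasyan}): by \cite[Theorem~1.1]{leary-minasyan} the associated $G(A,L)$ is CAT(0), but by the above it cannot embed into any biautomatic group.

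The main obstacle is the commensurator result itself. My plan is to combine two classical properties of a biautomatic group $\widehat{G}$ with a biautomatic structure on a generating set $S$: (a)~by Gersten--Short \cite{gersten-short}, every finitely generated abelian subgroup is undistorted, so after passing to a finite-index subgroup we may assume that the word metric on $H$ coming from a basis is bi-Lipschitz equivalent to the restriction of $d_S$; and (b)~the two-sided fellow-traveller property. Given $g \in \Comm_{\widehat{G}}(H)$, set $H_g := H \cap g^{-1}Hg$, a finite-index subgroup of $H$, and for each $h \in H_g$ compare the biautomatic normal forms of $h$ and of $ghg^{-1}$: the relation $g\cdot h = (ghg^{-1})\cdot g$ together with fellow-travelling forces $|d_S(1,h) - d_S(1,ghg^{-1})|$ to be bounded by a constant depending only on $g$. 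Combined with undistortion, this shows that the induced $\R$-linear automorphism of $H \otimes \R$ preserves a word-length (semi)norm up to a universally bounded multiplicative constant, so it lies in a compact subgroup of $GL(H\otimes\R)$. Since the discrete subgroup $\Aut(H) \cong GL_n(\Z)$ intersects every compact subgroup in a finite set, the image of $\Comm_{\widehat{G}}(H)$ in $\Aut(H\otimes\Q)$ must be finite, as required.
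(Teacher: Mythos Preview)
Your reduction to the commensurator statement is exactly right, and your sketch up through step~(c) can be made to work: by replacing $h$ with $h^k$ and letting $k \to \infty$, the additive bound $|d_S(1,h)-d_S(1,ghg^{-1})| \leq C_g$ together with undistortion shows that each $A_g$ preserves a genuine norm on $H \otimes \R$ (the stable translation length restricted to $H$), so the image of $\Comm_{\widehat G}(H)$ does lie in a compact subgroup of $GL_n(\R)$.

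The gap is step~(d). The canonical map sends $\Comm_{\widehat G}(H)$ into $\Comm(H) \cong GL_n(\Q)$, \emph{not} into $\Aut(H) \cong GL_n(\Z)$, and $GL_n(\Q)$ is not discrete in $GL_n(\R)$. A compact subgroup of $GL_n(\R)$ can meet $GL_n(\Q)$ in an infinite set: for instance
\[
A = \frac{1}{5}\begin{pmatrix} 3 & -4 \\ 4 & 3 \end{pmatrix} \in O(2) \cap GL_2(\Q)
\]
has infinite order. This is precisely the type of matrix Leary--Minasyan use to build their CAT(0) non-biautomatic examples, so your argument as written gives no obstruction in exactly the cases that matter.

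What the paper does to close this gap is replace ``some norm'' by a much more rigid object: a \emph{polyhedral} function $f$ on $\R^N$, built from the simply-starred structure of a biautomatic language on a quasiconvex free abelian overgroup $\widehat H \geq H$ (obtained via iterated centralisers), and then restricted to the span of $H$. The level set $f^{-1}(1)$ is bounded by finitely many affine hyperplanes, and any linear map preserving $f$ must permute those hyperplanes; hence the symmetry group of $f$ is finite outright, with no appeal to discreteness or rationality. Your fellow-traveller computation is still the mechanism showing that every $A_g$ preserves $f$. The missing idea, in short, is that biautomaticity produces not merely a norm but a polytope.
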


Theorem~\ref{thm:cor} is a consequence of Theorem~\ref{thm:main} below, which is a statement about commensurators of abelian subgroups in biautomatic groups.

Given a group $G$ and a subgroup $H \leq G$, we define the \emph{commensurator} $\Comm_G(H)$ of $H$ in $G$ as the set of elements $g \in G$ for which both $H \cap gHg^{-1}$ and $H \cap g^{-1}Hg$ have finite index in $H$; it is easily seen to be a subgroup of $G$. A related concept is that of \emph{abstract commensurator} $\Comm(H)$ of a group $H$, whose elements are equivalence classes of isomorphisms between finite-index subgroups of $H$, forming a group under composition (see Section~\ref{ssec:comm} for a precise definition). For any $H \leq G$, there is a canonical map $\Comm_G(H) \to \Comm(H)$ which sends $g \in \Comm_G(H)$ to the equivalence class of the isomorphism $\varphi\colon H \cap g^{-1}Hg \to H \cap gHg^{-1}$ defined as $\varphi(h) = ghg^{-1}$.

\begin{thm} \label{thm:main}
Let $G$ be a biautomatic group and let $H \leq G$ be a finitely generated abelian subgroup. Then the image of $\Comm_G(H)$ in $\Comm(H)$ is finite. In particular, there exists a finite-index subgroup $\Comm_G^0(H) \unlhd \Comm_G(H)$ such that every element of $\Comm_G^0(H)$ centralises some finite-index subgroup of $H$.
\end{thm}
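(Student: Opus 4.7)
Since passing to a finite-index subgroup affects neither $\Comm_G(H)$ nor its image in $\Comm(H)$, we may assume $H\cong\Z^n$ is torsion-free and identify $\Comm(H)\cong GL_n(\Q)$. Write $\Phi\colon\Comm_G(H)\to GL_n(\Q)$ for the homomorphism of the theorem. Fix a finite generating set $S$ of $G$ with associated word length $|\cdot|_S$, and define on $H$ the stable length $\|h\|:=\lim_{k\to\infty}|h^k|_S/k$; the limit exists by subadditivity and the resulting function is a seminorm. Using that finitely generated abelian subgroups of biautomatic groups are undistorted (Gersten--Short), one obtains $\|h\|>0$ for every $h\ne 0$, so $\|\cdot\|$ extends to a genuine norm on $\R^n\cong H\otimes\R$.

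\textbf{The commensurator acts by isometries.} For $g\in\Comm_G(H)$ and $h$ in the finite-index subgroup $H\cap g^{-1}Hg\le H$, the inequality $|g h^k g^{-1}|_S\le|h^k|_S+2|g|_S$ and its reverse (obtained via $g^{-1}$) yield $\|\Phi(g)h\|=\|h\|$. Since $H\cap g^{-1}Hg$ is a $\Q$-form of $\R^n$, continuity forces $\Phi(g)\in\mathrm{Isom}(\|\cdot\|)$, so that $\mathrm{Im}(\Phi)\subseteq GL_n(\Q)\cap\mathrm{Isom}(\|\cdot\|)$. Granted finiteness of $\mathrm{Im}(\Phi)$, the ``in particular'' clause is immediate: $\Comm_G^0(H):=\ker\Phi$ has finite index, and any $g\in\ker\Phi$ centralises the finite-index subgroup of $H$ on which $\Phi(g)$ restricts to the identity.

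\textbf{Main obstacle: finiteness of $\mathrm{Im}(\Phi)$.} The plan is to exhibit $\|\cdot\|$ as a \emph{polyhedral norm with rational vertices}: then $\mathrm{Isom}(\|\cdot\|)$ acts on the finite vertex set of the unit ball, and any subgroup contained in $GL_n(\Q)$ is determined by this permutation action and so is finite. Polyhedrality is where biautomaticity enters crucially. The expected mechanism is that the regular language of biautomatic normal forms interacts with the rationally embedded $H$ so that, up to an additive error controlled by the fellow-traveller constant and the number of states of the accepting automaton, the length $|h^k|_S$ is determined by finitely many $\Q$-linear data extracted from the normal form of $h$; this should exhibit $\|\cdot\|$ as the maximum of finitely many $\Q$-linear functionals on $\R^n$. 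Making this rigorous---in particular, producing a uniform finite combinatorial description of stable lengths for arbitrary $h\in H$, whose $G$-normal forms need not themselves factor through any obvious finite-state description of $H$---is the main technical challenge and where the bulk of the work is expected to lie.
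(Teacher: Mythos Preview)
Your overall strategy is the same as the paper's: reduce to torsion-free $H\cong\Z^n$, produce a conjugation-invariant ``length-like'' function on $H\otimes\R$, show it is polyhedral, and deduce that its linear symmetry group is finite. Your invariance argument via $|gh^kg^{-1}|_S\le|h^k|_S+2|g|_S$ is correct and in fact simpler than the paper's version of that step. The ``in particular'' clause is handled correctly.

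The gap is exactly the one you flag: you do not prove polyhedrality, and your sketch of ``$\Q$-linear data extracted from the normal form of $h$'' does not yet constitute a method. You correctly observe that $H$ need not be $\mathcal{M}$-quasiconvex, so there is no induced biautomatic structure on $H$ to exploit directly. This is precisely the obstacle the paper has to overcome, and the key manoeuvre is one your proposal is missing: one first embeds $H$ into a \emph{quasiconvex} free abelian subgroup $\widehat{H}$, obtained as a finite-index torsion-free subgroup of $Z(C_G(H))$ (centralisers of finite sets are quasiconvex, and quasiconvexity is transitive). The induced biautomatic structure $(X,\mathcal{L})$ on $\widehat{H}\cong\Z^N$ then has polynomial growth, hence $\mathcal{L}$ is simply starred, and this finite combinatorial description yields a polyhedral function $\widehat f$ on $\R^N$ approximating the $\mathcal{L}$-word length to within an additive constant. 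One then restricts $\widehat f$ to the subspace $H\otimes\R\subset\widehat{H}\otimes\R$; polyhedrality survives restriction to linear subspaces.

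There is also a subtler discrepancy worth noting. You work with the stable \emph{geodesic} length $\|h\|=\lim_k|h^k|_S/k$; the paper instead works with (the stable version of) the \emph{$\mathcal{M}$-normal-form length}, which is what the simply starred decomposition controls. These two functions are bi-Lipschitz but not equal, and it is the latter whose polyhedrality falls out of the language structure. Your hope that $\|\cdot\|$ is a maximum of finitely many linear functionals may well be true, but it is not what the argument actually delivers; the paper's polyhedral function is piecewise linear on a fan of rational cones but is not asserted to be convex, and the finiteness of its linear symmetry group (your $\mathrm{Isom}$) is proved directly from the fan structure rather than from a vertex-permutation argument. Incidentally, rationality of the vertices is not needed for finiteness: any polyhedral function has finite linear symmetry group.
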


Theorem~\ref{thm:main} can be seen as a generalisation of \cite[Theorem~1.2]{leary-minasyan}---the only difference between these two results is that $H$ is assumed to be finitely generated in the former and $\mathcal{M}$-quasiconvex (for some biautomatic structure $(Y,\mathcal{M})$ for $G$) in the latter; see Section~\ref{ssec:qconv} for a definition of $\mathcal{M}$-quasiconvex subsets. Indeed, $\mathcal{M}$-quasiconvexity implies finite generation for any subgroup of $G$, and so we may deduce \cite[Theorem~1.2]{leary-minasyan} from Theorem~\ref{thm:main}.

Note that Theorem~\ref{thm:cor} can be seen as an immediate corollary of Theorem~\ref{thm:main}. Indeed, if $G$ is biautomatic and $\widehat{G} \leq G$ is a subgroup isomorphic to a Leary--Minasyan group $G(A,L)$ (with an isomorphism sending $\widehat{t} \in \widehat{G}$ to $t \in G(A,L)$ and a subgroup $H < \widehat{G}$ to the subgroup $\langle x_1,\ldots,x_n \rangle < G(A,L)$), then we have $\widehat{t} \in \Comm_{\widehat{G}}(H) \leq \Comm_G(H)$. It then follows from Theorem~\ref{thm:main} that $t^k \in \Comm_G^0(H)$ for some $k \in \Z_{\geq 1}$, implying that $A^k = I_n$, and so that $A$ has order $\leq k$. Conversely, if $A$ has finite order then $G(A,L)$ is itself biautomatic by \cite[Theorem~1.1]{leary-minasyan}, and so it is a subgroup of a biautomatic group.

In fact, the class of CAT(0) groups not embeddable into biautomatic groups is more general than merely the groups $G(A,L)$. In particular, if $X$ is a proper CAT(0) space with no Euclidean factors, and if $K \leq \operatorname{Isom}(X)$ is a closed subgroup acting minimally and cocompactly on $X$, then one can show that a lattice $G$ in $\operatorname{Isom}(\mathbb{E}^n) \times K$ is not a subgroup of a biautomatic group unless $G$ has discrete image under the projection $\operatorname{Isom}(\mathbb{E}^n) \times K \to \operatorname{Isom}(\mathbb{E}^n)$. Indeed, it has been pointed out by S.~Hughes \cite[Theorem~7.7 and its proof]{hughes} that in this case there exists an element $t \in G$ that commensurates a subgroup $H \cong \Z^n$ of $G$, but $t^k$ does not centralise any finite index subgroup of $H$ (for any $k \geq 1$). The class of lattices in $\operatorname{Isom}(\mathbb{E}^n) \times K$, with $K$ as above, contains all CAT(0) Leary--Minasyan groups, and is studied in more detail by S.~Hughes in \cite{hughes}.

Our proof of Theorem~\ref{thm:main} relies on a triangulation of the sphere $\Sb^{N-1}$ associated to a biautomatic structure $(X,\mathcal{L})$ for $\Z^N$, described by W.~D.~Neumann and M.~Shapiro in \cite{neumann-shapiro}; see Remark~\ref{rmk:neumann-shapiro} for more details. We equip such a triangulation with an additional structure: namely, we define a \emph{polyhedral function} $f\colon \R^N \to \R$ (see Section~\ref{ssec:poly} for a definition) whose restriction to each polyhedral cone $\{ \beta \mathbf{v} \mid \mathbf{v} \in \Delta, \beta \in [0,\infty) \}$, where $\Delta \subseteq \Sb^{N-1}$ is a simplex in this triangulation, is linear and homogeneous. This function is chosen in such a way that it roughly approximates lengths of words in $\mathcal{L}$ representing an element of $\Z^N$. The key point of this construction is that, for a group $G$ with a biautomatic structure $(Y,\mathcal{M})$, it allows us to deal not only with an $\mathcal{M}$-quasiconvex abelian subgroup $\widehat{H} \leq G$, but also with any subgroup of such an $\widehat{H}$.

The structure of the paper is as follows. In Section~\ref{sec:prelim}, we introduce definitions and main results on commensurators, biautomatic groups and polyhedral functions. In Section~\ref{sec:geompoly}, we prove several results on polyhedral functions, and in Section~\ref{sec:biauto-poly} we associate a polyhedral function to a biautomatic structure for $\Z^N$ and we compare our construction to that of Neumann--Shapiro. In Section~\ref{sec:proof}, we use these results to prove Theorem~\ref{thm:main}.

\begin{ack}
I would like to thank Sam Hughes and the anonymous referee for their useful comments.
\end{ack}

\section{Preliminaries} \label{sec:prelim}

Throughout this paper, we denote by $\langle {-}, {-} \rangle$ the standard inner product on $\R^n$, and by $\| {-} \|$ the standard $\ell_2$-norm on $\R^n$, so that $\| \mathbf{v} \|^2 = \langle \mathbf{v},\mathbf{v} \rangle$ for all $\mathbf{v} \in \R^n$. We also write $I_n \in GL_n(\R)$ for the $n \times n$ identity matrix.

We denote by $[G : H]$ the index of a subgroup $H$ in a group $G$. We write $Z(G)$ for the centre of a group $G$, and $C_G(S)$ for the centraliser of a subset $S \subseteq G$.

\subsection{Commensurators} \label{ssec:comm}

Given a group $G$ and a subgroup $H \leq G$, the \emph{commensurator} of $H$ in $G$ is
\[
\Comm_G(H) := \{ g \in G \mid [H : H \cap gHg^{-1}] < \infty \text{ and } [H : H \cap g^{-1}Hg] < \infty \}.
\]
It is easy to check that $\Comm_G(H)$ is a subgroup of $G$ containing $H$.

A related notion is that of an abstract commensurator of a group $H$. In order to define it, let $\mathcal{C}_H$ be the set of all isomorphisms $\varphi\colon A \to B$, where $A$ and $B$ are finite-index subgroups of $H$. We say $\varphi,\psi \in \mathcal{C}_H$ are \emph{equivalent}, denoted $\varphi \sim_H \psi$ if there exists a finite-index subgroup $A$ of $H$ contained in the domains of both $\varphi$ and $\psi$ such that $\varphi(h) = \psi(h)$ for any $h \in A$; we denote by $[\varphi]$ or $[\varphi]_H$ the equivalence class of $\varphi$ in $\mathcal{C}_H$. We then define the \emph{abstract commensurator} of $H$ as
\[
\Comm(H) := \mathcal{C}_H / {\sim_H}.
\]
Given two isomorphisms $\varphi\colon A \to B$ and $\varphi'\colon A' \to B'$ between finite-index subgroups $A,A',B,B' \leq H$, we may define the product $[\varphi][\varphi']$ of the classes $[\varphi],[\varphi'] \in \Comm(H)$ to be the equivalence class of the map $\psi\colon (\varphi')^{-1}(A \cap B') \to \varphi(A \cap B')$ defined by $\psi(h) = \varphi(\varphi'(h))$. It is easy to verify that this makes $\Comm(H)$ into a group.

Now given an element $g \in \Comm_G(H)$ for groups $H \leq G$, we have $\varphi_g \in \mathcal{C}_H$, where $\varphi_g\colon H \cap g^{-1}Hg \to H \cap gHg^{-1}$ is the map defined by $\varphi_g(h) = ghg^{-1}$. Thus we have a canonical map $\Phi\colon \Comm_G(H) \to \Comm(H)$ sending $g \mapsto [\varphi_g]$, and this map can be easily checked to be a group homomorphism. It follows from the definitions that $g \in \ker(\Phi)$ precisely when $\varphi_g$ coincides with identity on some finite-index subgroup of $H \cap g^{-1}Hg$, which happens if and only if $g$ centralises a finite-index subgroup of $H$.

We will be interested in commensurators of finitely generated free abelian groups. In that case, it is easy to see that $\Comm(\Z^n)$ is isomorphic to $GL_n(\Q)$. Indeed, given a matrix $A \in GL_n(\Q)$, the intersection $L := \Z^n \cap A^{-1}(\Z^n)$ will have finite index in $\Z^n$, and we may define a map $\psi_A\colon L \to AL$ sending $\mathbf{v} \mapsto A\mathbf{v}$. This gives a map $GL_n(\Q) \to \Comm(\Z^n)$ sending $A \mapsto [\psi_A]$, which can be checked to be a group isomorphism.

We will also need to relate (abstract) commensurators of two groups one of which is a finite-index subgroup of another, and so we will use the following result. It is well-known, but we give a proof here for completeness.

\begin{lem} \label{lem:commfi}
Let $G$ be a group, and let $H,H' \leq G$ be subgroups such that $H' \leq H$ and $[H:H'] < \infty$. Then $\Comm_G(H) = \Comm_G(H')$, and there exists an isomorphism $\Psi\colon \Comm(H') \to \Comm(H)$ such that $\Phi = \Psi \circ \Phi'$, where $\Phi\colon \Comm_G(H) \to \Comm(H)$ and $\Phi'\colon \Comm_G(H') \to \Comm(H')$ are the canonical maps.
\end{lem}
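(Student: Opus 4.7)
The plan is to prove the equality $\Comm_G(H) = \Comm_G(H')$ and the existence of the isomorphism $\Psi$ separately, both by routine index manipulations leveraging the hypothesis $[H:H'] < \infty$.

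For the first part, I would show each inclusion in turn. Suppose $g \in \Comm_G(H)$. The map $h \mapsto (hH', ghg^{-1} \cdot gH'g^{-1})$ from $H \cap gHg^{-1}$ into the finite set $(H/H') \times (gHg^{-1}/gH'g^{-1})$ has kernel $H' \cap gH'g^{-1}$, so $[H \cap gHg^{-1} : H' \cap gH'g^{-1}] \leq [H:H']^2$; combined with $[H : H \cap gHg^{-1}] < \infty$ this gives $[H' : H' \cap gH'g^{-1}] < \infty$, and the symmetric argument handles $g^{-1}$. Conversely, if $g \in \Comm_G(H')$, then from $H' \cap gH'g^{-1} \leq H \cap gHg^{-1}$ and the chain $[H : H' \cap gH'g^{-1}] = [H:H'] \cdot [H' : H' \cap gH'g^{-1}] < \infty$ one immediately reads off $[H : H \cap gHg^{-1}] < \infty$.

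For the second part, I would define $\Psi$ directly on representatives. Given an isomorphism $\varphi \colon A \to B$ between finite-index subgroups $A,B \leq H'$, the subgroups $A$ and $B$ are also finite-index in $H$, so $\varphi$ already belongs to $\mathcal{C}_H$; I put $\Psi([\varphi]_{H'}) := [\varphi]_H$. Well-definedness is immediate: any finite-index subgroup of $H'$ witnessing $\sim_{H'}$ is automatically finite-index in $H$. That $\Psi$ is a group homomorphism is clear because composition of partial isomorphisms is what defines multiplication in both abstract commensurators. For injectivity, suppose $\varphi \sim_H \varphi'$ where $\varphi,\varphi' \in \mathcal{C}_{H'}$: a witness subgroup $C$ contained in both domains lies in $H'$ (the domains themselves lie in $H'$), has finite index in $H$, and therefore has finite index in $H'$, giving $\varphi \sim_{H'} \varphi'$.

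Surjectivity is the step that requires the most care, and I see it as the main technical point. Given $\varphi \colon A \to B$ with $A,B \leq H$ of finite index, I would restrict to $A' := A \cap H' \cap \varphi^{-1}(B \cap H')$. Each of $A$, $H'$, and $\varphi^{-1}(B \cap H')$ is finite-index in $H$, so $A'$ is finite-index in $H$; since $A' \leq H'$, it is also finite-index in $H'$. Likewise $\varphi(A') \leq B \cap H'$ is finite-index in $H'$, so $\varphi|_{A'}$ belongs to $\mathcal{C}_{H'}$, and $\varphi|_{A'} \sim_H \varphi$ witnessed by $A'$, yielding $\Psi([\varphi|_{A'}]_{H'}) = [\varphi]_H$. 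Finally, for the compatibility $\Phi = \Psi \circ \Phi'$, observe that the conjugation maps $\varphi_g^H \colon H \cap g^{-1}Hg \to H \cap gHg^{-1}$ and $\varphi_g^{H'} \colon H' \cap g^{-1}H'g \to H' \cap gH'g^{-1}$ agree on $H' \cap g^{-1}H'g$; this subgroup is finite-index in $H$ by the index estimates from the first part, and hence witnesses $[\varphi_g^{H'}]_H = [\varphi_g^H]_H$, i.e.\ $\Psi(\Phi'(g)) = \Phi(g)$.
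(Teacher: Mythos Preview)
Your proposal is correct and follows essentially the same route as the paper: index chasing for $\Comm_G(H)=\Comm_G(H')$, the map $\Psi([\varphi]_{H'})=[\varphi]_H$, and the restriction to $A\cap H'\cap\varphi^{-1}(B\cap H')$ for surjectivity are all exactly what the paper does. One small slip: in your map $h\mapsto(hH',\,ghg^{-1}\cdot gH'g^{-1})$ the second coordinate should be $h\cdot gH'g^{-1}$ (as written the kernel is $H'\cap gHg^{-1}$, not $H'\cap gH'g^{-1}$), but the intended argument is clear and correct.
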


\begin{proof}
Let $g \in G$, and denote by $K_\pm$ and $K_\pm'$ the groups $g^{\pm 1} H g^{\mp 1}$ and $g^{\pm 1} H' g^{\mp 1}$, respectively; note that $[K_\pm : K_\pm'] = [H : H'] < \infty$. If we have $[H' : H' \cap K_\pm'] < \infty$, then
\[
[H : H \cap K_\pm] \leq [H : H' \cap K_\pm'] = [H:H'] [H' : H' \cap K_\pm'] < \infty.
\]
Thus $\Comm_G(H') \subseteq \Comm_G(H)$. On the other hand, if $[H : H \cap K_\pm] < \infty$ then
\begin{align*}
[H' : H' \cap K_\pm'] &\leq [H : H' \cap K_\pm'] \\ &= [H : H \cap K_\pm] [H \cap K_\pm : H' \cap K_\pm] [H' \cap K_\pm : H' \cap K_\pm'] \\ &\leq [H : H \cap K_\pm] [H : H'] [K_\pm : K_\pm'] < \infty.
\end{align*}
Thus $\Comm_G(H) \subseteq \Comm_G(H')$, and so $\Comm_G(H) = \Comm_G(H')$.

Now any isomorphism $\varphi'\colon A' \to B'$ between finite-index subgroups of $H'$ is also an isomorphism between finite-index subgroups of $H$, and so represents a class $[\varphi']_H \in \Comm(H)$. Moreover, two such isomorphisms agree on a finite-index subgroup of $H'$ if and only if they agree on a finite-index subgroup of $H$. It follows that the map $\Psi\colon \Comm(H') \to \Comm(H)$, defined by $\Psi\left( [\varphi']_{H'} \right) = [\varphi']_H$, is well-defined and injective; furthermore, this map is easily seen to be a group homomorphism. Finally, given any isomorphism $\varphi\colon A \to B$ between finite-index subgroups of $H$ we may consider the map
\[
\varphi' = \varphi|_{A \cap H' \cap \varphi^{-1}(B \cap H')}\colon A \cap H' \cap \varphi^{-1}(B \cap H') \to B \cap H' \cap \varphi(A \cap H'),
\]
so that $[\varphi']_{H'} \in \Comm(H')$ and $[\varphi']_H = [\varphi]_H \in \Comm(H)$; thus $\Psi$ is surjective.

By construction, both $\Phi$ and $\Psi \circ \Phi'$ send an element $g \in \Comm_G(H) = \Comm_G(H')$ to the map represented by $\varphi_g\colon A \to gAg^{-1}, h \mapsto ghg^{-1}$ for some finite-index subgroup $A$ of $H$, implying that $\Phi = \Psi \circ \Phi'$, as required.
\end{proof}

\subsection{Biautomatic groups} \label{ssec:biauto}

Here we briefly introduce biautomatic groups and their main properties which we will be using in this paper. We refer the interested reader to \cite{epstein} for a more comprehensive account.

We fix a group $G$ with a finite generating set $Y$ of $G$; we view $Y$ as an abstract alphabet together with a fixed injective map $Y \hookrightarrow G$. We will always assume that $Y$ is \emph{symmetric} (i.e.\ the image $S$ of $Y$ in $G$ satisfies $S = S^{-1}$) and \emph{contains the identity} (i.e.\ contains an element $1 \in Y$ mapping to the identity $1_G \in G$). We denote by $Y^*$ the free monoid on $Y$, i.e.\ the set of all words with letters in $Y$, forming a monoid under concatenation. Given a word in $U \in Y^*$, we denote by $|U| \in \Z_{\geq 0}$ its length; furthermore, for any $t \in \Z_{\geq 0}$ we set $U(t)$ to be the prefix of $U$ of length $t$ if $t \leq |U|$, and we set $U(t) = U$ for $t > |U|$.

We have a monoid homomorphism $Y^* \to G$ induced by the inclusion $Y \hookrightarrow G$; we denote by $\overline{U} \in G$ the image of $U \in Y^*$ under this map, and we say that $U$ \emph{represents} $\overline{U}$. Given an element $g \in G$ we also denote by $|g|_Y$ the distance between vertices $1$ and $g$ in the Cayley graph $\Cay(G,Y)$; that is, $|g|_Y$ is equal to the length of the shortest word in $Y^*$ representing $g$, and so for any $U \in Y^*$ we have $\left| \overline{U} \right|_Y \leq |U|$.

A notion that appears in the theory of biautomatic groups is that of a \emph{finite state automaton} (\emph{FSA}). Roughly, a (deterministic) FSA $\mathfrak{A}$ over $Y$ is a finite directed multigraph $\Gamma$ with an assignment of a \emph{starting state} $v_0 \in V(\Gamma)$ and \emph{accept states} $\mathcal{A} \subseteq V(\Gamma)$, and with edges labelled by elements of $Y$ in such a way that for each vertex $v \in V(\Gamma)$ and each $y \in Y$, there exists at most one edge starting at $v$ and labelled by $y$; see \cite[Definition~1.2.1]{epstein} for a detailed definition. Given such an $\mathfrak{A}$, we say a subset $\mathcal{M} \subseteq Y^*$ is \emph{recognised} by $\mathfrak{A}$ if $\mathcal{M}$ is the set of words that label directed paths in $\Gamma$ starting at $v_0$ and ending in $\mathcal{A}$. A subset $\mathcal{M} \subseteq Y^*$ is said to be a \emph{regular language} if it is recognised by some FSA over $Y$.

This allows us to define biautomatic groups, as follows.

\begin{defn} \label{defn:biauto}
Let $G$ be a group, let $Y$ be a finite symmetric generating set of $G$ containing the identity, and let $\mathcal{M} \subseteq Y^*$. We say that $(Y,\mathcal{M})$ is a \emph{biautomatic structure} for $G$ if
\begin{enumerate}
\item $\mathcal{M}$ is a regular language; and
\item \label{it:biauto-ft} $\mathcal{M}$ satisfies the (\emph{two-sided}) \emph{fellow traveller property}: that is, there exists a constant $\lambda \geq 0$ such that if $U,V \in \mathcal{M}$ and $x,y \in Y$ are such that $\overline{U} = \overline{xVy}$, then $\left| \overline{U(t)}^{-1} \overline{x} \overline{V(t)} \right|_Y \leq \lambda$ for all $t \in \Z_{\geq 0}$.
\end{enumerate}
We say a biautomatic structure $(Y,\mathcal{M})$ for $G$ is \emph{finite-to-one} if for each $g \in G$ there exist only finitely many $U \in \mathcal{M}$ such that $\overline{U} = g$. We say $G$ is \emph{biautomatic} if it has a biautomatic structure.
\end{defn}

Condition~\ref{it:biauto-ft} in Definition~\ref{defn:biauto} has a more geometric description. In particular, if $U,V \in \mathcal{M}$ are such that $\overline{U} = \overline{xVy}$ for some $x,y \in Y$, then there exist paths $P_U$ and $P_V$ in $\Cay(G,Y)$ starting and ending distance $\leq 1$ away and labelled by $U$ and $V$, respectively. For any $t \in \Z_{\geq 0}$, we set $P_U(t)$ to be the initial subpath of $P_U$ of length $t$ if $t \leq |U|$, and we set $P_U(t) = P_U$ if $t > |U|$; we define $P_V(t)$ similarly. Condition~\ref{it:biauto-ft} then says that the endpoints of $P_U(t)$ and $P_V(t)$ are distance $\leq \lambda$ apart for any $t$.

It is known that every biautomatic group admits a finite-to-one biautomatic structure: see \cite[Theorem~2.5.1]{epstein}. For such a biautomatic structure, we have the following observation that will be crucial in our arguments.

\begin{lem}[see {\cite[Lemma~2.3.9]{epstein}}] \label{lem:simlengths}
Let $(Y,\mathcal{M})$ be a finite-to-one biautomatic structure on a group $G$. Then there exists a constant $\kappa \geq 0$ such that if $U,V \in \mathcal{M}$ and $x,y \in Y$ are such that $\overline{U} = \overline{xVy}$, then $|U|-\kappa \leq |V| \leq |U|+\kappa$. \qed
\end{lem}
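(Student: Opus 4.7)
The plan is to combine the fellow traveller property with the finite-to-one condition by means of a pumping argument on an FSA that recognises $\mathcal{M}$. Fix such an automaton $\mathfrak{A}$ with state set $Q$, let $\lambda$ be the constant from condition~(\ref{it:biauto-ft}) in Definition~\ref{defn:biauto}, and let $B$ be the cardinality of the ball of $Y$-radius $\lambda + 1$ around $1_G$; this is finite since $Y$ is. I will set $\kappa := |Q| \cdot B$ (with a possible small additive correction absorbed into the argument).

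Suppose for contradiction that $U, V \in \mathcal{M}$ with $\overline{U} = \overline{xVy}$ satisfy $|U| > |V| + \kappa$. For every integer $t$ with $|V| \leq t \leq |U|$ we have $V(t) = V$, so the fellow traveller property yields $\bigl| \overline{U(t)}^{-1} \overline{x}\, \overline{V} \bigr|_Y \leq \lambda$. Substituting $\overline{x}\,\overline{V} = \overline{U}\,\overline{y}^{-1}$ gives $\bigl| \overline{U(t)}^{-1} \overline{U} \bigr|_Y \leq \lambda + 1$, so the group element $\overline{U(t)}$ lies in a translate of the $Y$-ball of radius $\lambda + 1$, a set of cardinality at most $B$. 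Associating to each such $t$ the pair consisting of the state of $\mathfrak{A}$ reached after reading $U(t)$ together with the group element $\overline{U(t)}$, I obtain $|U| - |V| + 1 > |Q| \cdot B$ pairs drawn from a set of size $|Q| \cdot B$, so by pigeonhole there exist $t < t'$ in $\{|V|, \ldots, |U|\}$ with coinciding states and $\overline{U(t)} = \overline{U(t')}$.

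The coincidence of automaton states lets us pump: for each $k \geq 0$, the word $U_k$ obtained by replacing the subword of $U$ occupying positions $t+1, \ldots, t'$ by $k$ concatenated copies of itself labels a valid path in $\mathfrak{A}$ from the start state to an accept state, so $U_k \in \mathcal{M}$; and the coincidence of group elements forces $\overline{U_k} = \overline{U}$ for every $k$. The words $U_k$ are pairwise distinct, contradicting the finite-to-one property and establishing $|U| \leq |V| + \kappa$. For the reverse inequality, the hypothesis $\overline{U} = \overline{xVy}$ is equivalent to $\overline{V} = \overline{x^{-1} U y^{-1}}$, and the fellow traveller bound is invariant under inversion (the relevant expression becomes its own inverse when the roles of $U$ and $V$ are swapped), so running the same argument with the roles of $U$ and $V$ interchanged yields $|V| \leq |U| + \kappa$. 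The one delicate point is ensuring the pumped words actually lie in $\mathcal{M}$, which is precisely why the pigeonhole step is performed on pairs \emph{(state, group element)} rather than on group elements alone.
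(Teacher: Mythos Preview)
Your argument is correct. The paper does not supply its own proof of this lemma: it is quoted from \cite[Lemma~2.3.9]{epstein} with a terminal \qed, so there is nothing in the paper to compare your proof against. What you have written is essentially the standard argument one would expect for this statement---combining the fellow-traveller bound (to confine $\overline{U(t)}$ to a bounded set once $t \geq |V|$) with a pigeonhole on pairs (automaton state, group element) and then pumping to contradict finite-to-one.

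Two small remarks. First, when you write $x^{-1}, y^{-1}$ for the reverse inequality, strictly speaking $Y$ is an abstract alphabet, so you should take letters $x', y' \in Y$ with $\overline{x'} = \overline{x}^{-1}$ and $\overline{y'} = \overline{y}^{-1}$; these exist because the paper assumes $Y$ is symmetric. Second, the parenthetical about invariance under inversion is fine but unnecessary: once you have $\overline{V} = \overline{x' U y'}$ with $x', y' \in Y$, you simply re-apply the fellow-traveller property in Definition~\ref{defn:biauto} directly with the roles of $U$ and $V$ swapped, and the identical pumping argument goes through.
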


\subsection{Quasiconvex subsets and subgroups} \label{ssec:qconv}

An important notion in biautomatic groups is that of quasiconvexity, defined as follows. We refer the interested reader to the paper \cite{gersten-short} by S.~M.~Gersten and H.~B.~Short for more details.

\begin{defn} \label{defn:qconv}
Let $(Y,\mathcal{M})$ be a biautomatic structure for a group $G$. We say a subset $S \subseteq G$ is \emph{$\mathcal{M}$-quasiconvex} if there exists a constant $\nu \geq 0$ such that any path in $\Cay(G,Y)$ that starts and ends at vertices in $S$ and is labelled by a word in $\mathcal{M}$ belongs to the $\nu$-neighbourhood of $S$.
\end{defn}

An important consequence of quasiconvexity is that if an $\mathcal{M}$-quasiconvex subset is a subgroup, then it is itself biautomatic. More precisely, we have the following result.

\begin{thm}[see {\cite[Theorem~3.1 and its proof]{gersten-short}}] \label{thm:qconv-biauto}
Let $(Y,\mathcal{M})$ be a biautomatic structure on a group $G$, and let $H \leq G$ be an $\mathcal{M}$-quasiconvex subgroup. Then there exists a biautomatic structure $(X,\mathcal{L})$ for $H$ and a constant $\mu \geq 0$ such that for any $V \in \mathcal{M}$ with $\overline{V} \in H$, there exists $U \in \mathcal{L}$ with $\overline{U} = \overline{V}$, $|U| = |V|$ and $\left| \overline{U(t)}^{-1} \overline{V(t)} \right|_Y \leq \mu$ for all $t \in \Z_{\geq 0}$. Moreover, if $(Y,\mathcal{M})$ is finite-to-one then so is $(X,\mathcal{L})$. \qed
\end{thm}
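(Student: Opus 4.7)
The idea is to construct $(X,\mathcal{L})$ by ``projecting'' each word $V \in \mathcal{M}$ representing an element of $H$ onto a nearby $H$-path, using the quasiconvexity constant $\nu$ of $H$ in $\Cay(G,Y)$. First I would take $X$ to be the (finite, symmetric) set of all $h \in H$ with $|h|_Y \leq 2\nu+1$, together with the identity; that $X$ generates $H$ follows by applying the projection below to any $\mathcal{M}$-word representing a given $h \in H$. Then for each $V \in \mathcal{M}$ with $\overline{V} \in H$ and each $0 \leq t \leq |V|$, I pick some $h_t \in H$ within $Y$-distance $\nu$ of $\overline{V(t)}$, subject to $h_0 = 1$ and $h_{|V|} = \overline{V}$. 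The letters $x_t := h_{t-1}^{-1} h_t$ lie in $X$ by the triangle inequality, so the word $U := x_1 \cdots x_{|V|}$ satisfies $|U| = |V|$, $\overline{U} = \overline{V}$, and $|\overline{U(t)}^{-1}\overline{V(t)}|_Y = |h_t^{-1}\overline{V(t)}|_Y \leq \nu$; thus $\mu := \nu$ will suffice. I let $\mathcal{L}$ consist of all words $U$ obtainable in this way, as $V$ ranges over $\{V \in \mathcal{M} : \overline{V} \in H\}$ and the choices of $h_t$ vary.

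\textbf{Regularity and fellow travelling.} To show $\mathcal{L}$ is regular, I would construct a product automaton whose states are pairs $(q, z)$, with $q$ a state of an FSA for $\mathcal{M}$ and $z$ ranging over the (finite) $Y$-ball of radius $\nu$ about $1 \in G$, encoding the displacement $h_t^{-1}\overline{V(t)}$; from $(q, z)$ a transition on $x \in X$ is permitted whenever the $\mathcal{M}$-FSA has a transition on some $y \in Y$ from $q$ to $q'$ for which $z' := x^{-1} z y$ still lies in the $\nu$-ball, with successor $(q', z')$; acceptance requires $q$ accepting in $\mathcal{M}$ and $z = 1$. For the fellow traveller property, given $U_1, U_2 \in \mathcal{L}$ with $\overline{U_1} = \overline{a U_2 b}$ for $a, b \in X$, I would lift to $V_1, V_2 \in \mathcal{M}$ of the same lengths with $\overline{V_i} = \overline{U_i}$; writing $a, b$ as $Y$-words of length $\leq 2\nu+1$ and iterating the $\mathcal{M}$-fellow-traveller property would yield a uniform $Y$-bound on $\overline{V_1(t)}^{-1}\overline{V_2(t)}$, which together with the $\nu$-bounds for the projections gives a $Y$-bound on $\overline{U_1(t)}^{-1}\overline{U_2(t)} \in H$.

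\textbf{Main obstacle, and finite-to-one.} The key technical point is converting such a $Y$-length bound on an element of $H$ into an $X$-length bound: if $h \in H$ satisfies $|h|_Y \leq C$, then there are only finitely many such $h$, and picking some $W \in \mathcal{M}$ with $\overline{W} = h$, Lemma~\ref{lem:simlengths} bounds $|W|$ in terms of $C$, after which the projection construction yields an $X$-word for $h$ of length bounded by a constant depending only on $C$, $\nu$ and $\mathcal{M}$. Applied to $\overline{U_1(t)}^{-1}\overline{U_2(t)}$, this completes the fellow-traveller verification for $\mathcal{L}$. Finally, if $(Y,\mathcal{M})$ is finite-to-one then each $h \in H$ has finitely many $\mathcal{M}$-preimages, and each such $V$ produces only finitely many accepting runs of the product FSA (since each is determined by a sequence of states $z_t$ drawn from a finite set), so $(X,\mathcal{L})$ is also finite-to-one.
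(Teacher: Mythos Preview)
The paper does not supply its own proof of this theorem---it is quoted from Gersten--Short with a \qed---and your sketch is precisely the Gersten--Short construction: take $X$ to be the short elements of $H$, project each $\mathcal{M}$-word for an element of $H$ letter-by-letter onto $H$ using the quasiconvexity constant, recognise the resulting language by a product automaton tracking (state of the $\mathcal{M}$-FSA, current displacement in the $\nu$-ball), and deduce fellow-travelling in $X$ from fellow-travelling in $Y$. So your approach is correct and coincides with the cited one.

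One small wrinkle: in your ``main obstacle'' paragraph you invoke Lemma~\ref{lem:simlengths} to bound $|W|$ for $\overline{W}=h$ with $|h|_Y\le C$, but that lemma is stated only for finite-to-one structures, whereas the main clause of the theorem does not assume this. The fix is immediate and simpler than what you wrote: you have already shown $X$ generates $H$, and there are only finitely many $h\in H$ with $|h|_Y\le C$, so $\max\{|h|_X:h\in H,\ |h|_Y\le C\}$ is a finite constant depending only on $C$; no appeal to Lemma~\ref{lem:simlengths} is needed.
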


We refer to the biautomatic structure $(X,\mathcal{L})$ given by Theorem~\ref{thm:qconv-biauto} as the biautomatic structure \emph{associated} to $(Y,\mathcal{M})$. It follows from Theorem~\ref{thm:qconv-biauto} that the quasiconvexity relation between groups equipped with biautomatic structures is transitive in the sense of Lemma~\ref{lem:qconv-trans} below. This result is straightforward, but we give a proof for completeness.

\begin{lem} \label{lem:qconv-trans}
Let $G$ be a group with a biautomatic structure $(Y,\mathcal{M})$, let $H \leq G$ be an $\mathcal{M}$-quasiconvex subgroup with the associated biautomatic structure $(X,\mathcal{L})$, and let $K \leq H$ be an $\mathcal{L}$-quasiconvex subgroup. Then $K$ is $\mathcal{M}$-quasiconvex in $G$.
\end{lem}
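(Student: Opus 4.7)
The plan is to take a path in $\Cay(G,Y)$ starting and ending in $K$ labelled by a word $V \in \mathcal{M}$, and construct a nearby path that stays close to $K$. By translating, I may assume the starting vertex is $1 \in K$, so that $\overline{V} \in K$, and in particular $\overline{V} \in H$. The starting idea is to apply Theorem~\ref{thm:qconv-biauto} to replace $V$ by a word in $\mathcal{L}$ whose prefixes stay uniformly close (in the $Y$-metric) to the prefixes of $V$, so that the problem reduces to controlling the $\mathcal{L}$-path using $\mathcal{L}$-quasiconvexity of $K$ in $H$.

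Concretely, I would first apply Theorem~\ref{thm:qconv-biauto} to $V \in \mathcal{M}$ to obtain $U \in \mathcal{L}$ with $\overline{U} = \overline{V}$ and a uniform constant $\mu \geq 0$ such that $\bigl|\overline{U(t)}^{-1}\overline{V(t)}\bigr|_Y \leq \mu$ for every $t$. Next, since $U \in \mathcal{L}$ and both its endpoints ($1$ and $\overline{V} = \overline{U}$) lie in $K$, the $\mathcal{L}$-quasiconvexity of $K$ in $H$ with some constant $\nu' \geq 0$ gives that each $\overline{U(t)}$ lies within $X$-distance $\nu'$ of $K$ in $\Cay(H,X)$. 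Then I would convert this back to a statement about the ambient generating set $Y$: since $X$ is finite and each $x \in X$ represents an element of $G$, the constant $C := \max_{x \in X} |\overline{x}|_Y$ is finite, and $X$-distance $\nu'$ in $H$ implies $Y$-distance at most $C\nu'$ in $G$. Combining the two estimates, $\overline{V(t)}$ lies within $Y$-distance $\mu + C\nu'$ of $K$, establishing $\mathcal{M}$-quasiconvexity of $K$ with constant $\mu + C\nu'$.

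There is no real obstacle here; the only subtlety is making sure the constants produced by Theorem~\ref{thm:qconv-biauto} and by $\mathcal{L}$-quasiconvexity can be transferred to the $Y$-metric, which is automatic from finiteness of the generating set $X$ (viewed as a subset of $G$). I would just need to state the constant conversion carefully and note that $X \subseteq H \subseteq G$ makes sense because the biautomatic structure $(X,\mathcal{L})$ on $H$ comes with an identification of the alphabet $X$ with elements of $H$, hence of $G$.
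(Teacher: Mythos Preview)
Your proposal is correct and follows essentially the same argument as the paper: apply Theorem~\ref{thm:qconv-biauto} to replace $V$ by $U \in \mathcal{L}$ whose prefixes stay within $\mu$ of those of $V$ in the $Y$-metric, use $\mathcal{L}$-quasiconvexity of $K$ to get each $\overline{U(t)}$ within $X$-distance $\nu'$ of $K$, and convert via $\delta = \max_{x\in X} |\overline{x}|_Y$ to conclude with constant $\mu + \delta\nu'$. The only cosmetic difference is that the paper works directly with $\overline{V} \in H$ rather than explicitly translating the path to start at $1$, but since $K$ is a subgroup this amounts to the same thing.
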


\begin{proof}
It follows by Theorem~\ref{thm:qconv-biauto} that there exists $\mu \geq 0$ such that for any $V \in \mathcal{M}$ with $\overline{V} \in H$, there exists $U \in \mathcal{L}$ with $\overline{U} = \overline{V}$ and $\left| \overline{V(t)}^{-1} \overline{U(t)} \right|_Y \leq \mu$ for all $t \in \Z_{\geq 0}$. Now if in addition we have $\overline{V} \in K$, then $\overline{U} \in K$ and so, by Definition~\ref{defn:qconv}, for each $t \in \Z_{\geq 0}$ there exists $k_t \in K$ such that $\left| \overline{U(t)}^{-1} k_t \right|_X \leq \nu$, where $\nu \geq 0$ is some universal constant. If we set $\delta := \max \{ |\overline{x}|_Y \mid x \in X \}$, we then have
\[
\left| \overline{V(t)}^{-1} k_t \right|_Y \leq \left| \overline{V(t)}^{-1} \overline{U(t)} \right|_Y + \left| \overline{U(t)}^{-1} k_t \right|_Y \leq \mu + \delta \nu,
\]
for all $t$, and so any path in $\Cay(G,Y)$ represented by $V$ whose endpoints belong to $K$ is in the $(\mu+\delta\nu)$-neighbourhood of $K$. Thus $K$ is $\mathcal{M}$-quasiconvex in $G$, as required.
\end{proof}

One of the main sources of $\mathcal{M}$-quasiconvex subgroups in a biautomatic group $G$ are centralisers of finite subsets, as per the following result.

\begin{prop}[{\cite[Proposition~4.3]{gersten-short}}] \label{prop:centr-qconv}
Let $(Y,\mathcal{M})$ be a biautomatic structure on a group $G$, and let $S \subseteq G$ be a finite subset. Then $C_G(S)$ is $\mathcal{M}$-quasiconvex. \qed
\end{prop}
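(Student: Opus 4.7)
The plan is to exhibit a constant $\nu \geq 0$, depending only on $S$ and $(Y,\mathcal{M})$, such that for every $V \in \mathcal{M}$ with $\overline{V} \in C_G(S)$ and every $t \geq 0$, the element $\overline{V(t)}$ lies within $Y$-distance $\nu$ of $C_G(S)$. The argument proceeds in three phases: an iterated version of the fellow-traveller property, a uniform bound on conjugates of elements of $S$ by prefixes $\overline{V(t)}$, and a pigeonhole extraction exploiting finiteness of $S$.

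First I would establish the following strengthening of condition~\ref{it:biauto-ft} of Definition~\ref{defn:biauto}: if $U, V \in \mathcal{M}$ and $x, y \in G$ satisfy $\overline{U} = x\overline{V}y$, then $\bigl|\overline{U(t)}^{-1} x \overline{V(t)}\bigr|_Y \leq (|x|_Y + |y|_Y)\lambda$ for every $t \geq 0$, where $\lambda$ is the fellow-traveller constant. To see this, write $x = z_1 \cdots z_p$ and $y = w_1 \cdots w_q$ as geodesic words over $Y$, and -- using that a biautomatic structure is tacitly assumed to surject onto $G$ -- choose words $V_0 = V, V_1, \ldots, V_{p+q} = U$ in $\mathcal{M}$ with $\overline{V_i} = z_{p-i+1}\overline{V_{i-1}}$ for $i \leq p$ and $\overline{V_i} = \overline{V_{i-1}} w_{i-p}$ for $i > p$. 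Applying condition~\ref{it:biauto-ft} to each consecutive pair $(V_{i-1},V_i)$ and telescoping the resulting $Y$-length-$\lambda$ errors yields the claim.

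Now fix any $V \in \mathcal{M}$ with $\overline{V} \in C_G(S)$. For each $s \in S$, applying the above lemma with $U = V$, $x = s$, $y = s^{-1}$ (valid since $s\overline{V}s^{-1} = \overline{V}$) gives $\bigl|\overline{V(t)}^{-1} s \overline{V(t)}\bigr|_Y \leq 2|s|_Y \lambda$ for every $t$. Let $R := \max_{s \in S} 2|s|_Y \lambda$ and $B_R := \{g \in G : |g|_Y \leq R\}$, which is finite. The ``conjugation fingerprint'' $\phi_t := \bigl(\overline{V(t)}^{-1} s \overline{V(t)}\bigr)_{s \in S}$ of each prefix of $V$ then lies in the finite set $B_R^S$.

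Observe that if $h_1, h_2 \in G$ have equal fingerprints then for each $s \in S$, $(h_2 h_1^{-1}) s (h_2 h_1^{-1})^{-1} = h_2 (h_2^{-1} s h_2) h_2^{-1} = s$, so $h_2 h_1^{-1} \in C_G(S)$; hence each realised fingerprint $\phi \in B_R^S$ has a single right coset of $C_G(S)$ as its preimage. Pick a minimum-$|\cdot|_Y$ representative $h_\phi$ of each such coset, and set $\nu$ to be the maximum of $|h_\phi|_Y$ over the finitely many realised fingerprints. Any $h \in G$ with fingerprint $\phi$ then satisfies $h = c h_\phi$ for some $c \in C_G(S)$, whence the $Y$-distance from $h$ to $c$ equals $|h_\phi|_Y \leq \nu$; applying this to $h = \overline{V(t)}$ yields the $\mathcal{M}$-quasiconvexity of $C_G(S)$. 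I expect the iterated fellow-traveller lemma to be the main technical obstacle, as it depends on the (often tacit) surjectivity of $\mathcal{M}$ onto $G$, without which one would need to reformulate the intermediate interpolation; the combinatorial extraction is clean but uses essentially that $S$ is finite, so that only finitely many fingerprints are ever realised.
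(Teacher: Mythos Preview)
The paper does not give its own proof of this proposition: it is stated with a citation to Gersten--Short and closed with a \qed. Your argument is correct and is, in essence, the Gersten--Short proof---iterate the two-sided fellow-traveller property to bound $\bigl|\overline{V(t)}^{-1} s\,\overline{V(t)}\bigr|_Y$ uniformly for each $s\in S$, then observe that prefixes with the same tuple of conjugates lie in a single right coset of $C_G(S)$, so each prefix is within a fixed distance of the centraliser.

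Two minor remarks. First, your appeal to surjectivity of $\mathcal{M}\to G$ is legitimate: although Definition~\ref{defn:biauto} does not state it explicitly, the paper defers to \cite{epstein} for the full definition, where surjectivity is part of the notion of an automatic structure. Second, in your interpolation you write ``$V_0=V,\,V_1,\ldots,V_{p+q}=U$'': this is consistent, since the recursion forces $\overline{V_{p+q}}=x\overline{V}y=\overline{U}$, so one may simply take $V_{p+q}=U$ and apply the fellow-traveller bound between $V_{p+q-1}$ and $U$ at the final step. The telescoping then gives exactly the $(|x|_Y+|y|_Y)\lambda$ bound you state.
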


\subsection{Polyhedral functions} \label{ssec:poly}

Finally, we introduce polyhedral functions, which we will use to approximate lengths of words in $\mathcal{L}$, where $(X,\mathcal{L})$ is a biautomatic structure for $\Z^n$.

\begin{defn} \label{defn:polyhedral}
Given a finite subset $Z = \{ \mathbf{z}_1,\ldots,\mathbf{z}_\alpha\} \subseteq \R^n$, a \emph{polyhedral cone over $Z$} is the set $C(Z) = \left\{ \sum_{j=1}^\alpha \mu_j \mathbf{z}_j \,\middle|\, \mu_1,\ldots,\mu_\alpha \geq 0 \right\} \subseteq \R^n$. Given a polyhedral cone $C = C(Z)$ and $\mathbf{y} \in \R^n$ such that $\langle \mathbf{z},\mathbf{y} \rangle > 0$ for all $\mathbf{z} \in Z$ (equivalently, $\langle \mathbf{z},\mathbf{y} \rangle > 0$ for all $\mathbf{z} \in C \setminus \{\mathbf{0}\}$), we define a function $f_{C,\mathbf{y}}\colon \R^n \to \R$ by
\[
f_{C,y}(\mathbf{v}) = \begin{cases} \langle \mathbf{v},\mathbf{y} \rangle & \text{if } \mathbf{v} \in C, \\ 0 & \text{otherwise}. \end{cases}
\]

We say $f\colon \R^n \to \R$ is a \emph{polyhedral function} if there exists a finite collection of functions $f_{C_1,\mathbf{y}_1},\ldots,f_{C_m,\mathbf{y}_m}\colon \R^n \to \R$ as above such that
\begin{enumerate}
\item \label{it:polyhedral-cover} $\R^n = \bigcup_{j=1}^m C_j$;
\item \label{it:polyhedral-compat} if $\mathbf{v} \in C_j \cap C_k$ then $\langle \mathbf{v},\mathbf{y}_j \rangle = \langle \mathbf{v},\mathbf{y}_k \rangle$; and
\item \label{it:polyhedral-defn} $f(\mathbf{v}) = \max \{ f_{C_j,\mathbf{y}_j}(\mathbf{v}) \mid 1 \leq j \leq m \}$ for all $\mathbf{v} \in \R^n$.
\end{enumerate}
\end{defn}

Note that if $f$ is a polyhedral function then $f$ is continuous and positively homogeneous: that is, $f(\mu \mathbf{v}) = \mu f(\mathbf{v})$ for all $\mathbf{v} \in \R^n$ and $\mu \geq 0$. Such functions have been studied before, for instance, in \cite{melzer}: in their notation, a polyhedral function is precisely a function that belongs to $\mathscr{P}_+(E_n)$ and is positive (that is, $f(\mathbf{v}) \geq 0$ for all $\mathbf{v}$, with equality if and only if $\mathbf{v} = \mathbf{0}$).

In this paper we will use the following well-known alternative characterisation of polyhedral cones. Here, a \emph{linear halfspace} is a subset of $\R^n$ of the form $\{ \mathbf{v} \in \R^n \mid \langle \mathbf{v},\mathbf{w} \rangle \geq 0 \}$ for some $\mathbf{w} \in \R^n \setminus \{\mathbf{0}\}$.

\begin{thm}[J.~Farkas, H.~Minkowski and H.~Weyl; see {\cite[Theorems~6~\&~7]{kaibel}}] \label{thm:coneequiv}
Let $C \subseteq \R^n$ be a subset. Then the following are equivalent:
\begin{enumerate}
\item $C$ is a polyhedral cone over some finite subset $Z \subseteq \R^n$;
\item there exist linear halfspaces $K_1,\ldots,K_\beta \subseteq \R^n$ such that $C = \bigcap_{j=1}^\beta K_j$. \qed
\end{enumerate}
\end{thm}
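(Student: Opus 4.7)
The plan is to prove the two implications separately: (i) $\Rightarrow$ (ii) by Fourier--Motzkin elimination, and (ii) $\Rightarrow$ (i) by a duality argument via Farkas' lemma together with the direction already established. For (i) $\Rightarrow$ (ii), write $Z = \{\mathbf{z}_1,\ldots,\mathbf{z}_\alpha\}$ and consider the set
\[
P = \left\{ (\mathbf{v},\mu_1,\ldots,\mu_\alpha) \in \R^{n+\alpha} \,\middle|\, \mathbf{v} - \sum_{j=1}^\alpha \mu_j \mathbf{z}_j = \mathbf{0},\ \mu_j \geq 0 \text{ for each } j \right\},
\]
which is an intersection of finitely many linear halfspaces in $\R^{n+\alpha}$ once each linear equality is rewritten as a pair of opposite inequalities. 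Then $C(Z)$ is the image of $P$ under the projection $\R^{n+\alpha} \to \R^n$ onto the first $n$ coordinates. Fourier--Motzkin elimination says that the coordinate projection of an intersection of finitely many halfspaces is again an intersection of finitely many halfspaces; iterating it $\alpha$ times to eliminate $\mu_1,\ldots,\mu_\alpha$ in turn, and checking that linearity (i.e.\ each resulting halfspace passing through the origin) is preserved at every step, yields the desired description of $C(Z)$ as an intersection of linear halfspaces.

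For (ii) $\Rightarrow$ (i), I would introduce the dual cone $C^\vee := \{\mathbf{y} \in \R^n \mid \langle \mathbf{y},\mathbf{v} \rangle \geq 0 \text{ for all } \mathbf{v} \in C\}$, and suppose $C = \bigcap_{j=1}^\beta \{\mathbf{v} \mid \langle \mathbf{w}_j,\mathbf{v} \rangle \geq 0\}$. The inclusion $C(\{\mathbf{w}_1,\ldots,\mathbf{w}_\beta\}) \subseteq C^\vee$ is immediate since each $\mathbf{w}_j \in C^\vee$ and $C^\vee$ is closed under conical combinations; the reverse inclusion is precisely Farkas' lemma, which I would prove by applying a hyperplane separation theorem to a point $\mathbf{y} \in C^\vee$ and the closed convex cone $C(\{\mathbf{w}_1,\ldots,\mathbf{w}_\beta\})$ (its closedness being an elementary compactness argument that uses finite generation). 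Hence $C^\vee = C(\{\mathbf{w}_1,\ldots,\mathbf{w}_\beta\})$ is finitely generated; by the direction (i) $\Rightarrow$ (ii) just established, $C^\vee$ is also an intersection of finitely many linear halfspaces, and one further application of Farkas' lemma yields that $(C^\vee)^\vee$ is finitely generated. It then remains to verify the bipolar identity $(C^\vee)^\vee = C$, which holds for every closed convex cone (and $C$ is closed and convex since it is an intersection of closed halfspaces) by another hyperplane separation argument.

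The main obstacle is Farkas' lemma itself; once that is in hand, everything else---Fourier--Motzkin elimination, the closedness of finitely generated cones, and the bipolar identity---is routine. The essential ingredient shared between Farkas' lemma and the bipolar identity is the fact that a finitely generated cone is a closed subset of $\R^n$, after which both results reduce to standard hyperplane separation for closed convex sets in Euclidean space.
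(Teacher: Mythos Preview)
Your proposal is correct and outlines a standard proof of the Farkas--Minkowski--Weyl theorem. Note, however, that the paper does not actually prove this statement: it is quoted as a classical result with a citation (the \texttt{\textbackslash qed} at the end of the statement signals that no proof is given), so there is no ``paper's own proof'' to compare against.

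That said, your outline is sound. The Fourier--Motzkin argument for (i) $\Rightarrow$ (ii) is the usual one, and you are right that linearity of the halfspaces is preserved at each elimination step since all the defining inequalities are homogeneous. For (ii) $\Rightarrow$ (i), the duality argument via $C^\vee$ is also standard; the identification $C^\vee = C(\{\mathbf{w}_1,\ldots,\mathbf{w}_\beta\})$ is indeed Farkas' lemma, and the bipolar identity $(C^\vee)^\vee = C$ for closed convex cones follows from separation. One small point worth making explicit: when you invoke hyperplane separation to prove Farkas' lemma, you need the finitely generated cone to be \emph{closed}, and you correctly flag this as requiring a short compactness argument (e.g.\ induction on the number of generators, or Carath\'eodory's theorem plus compactness of the simplex). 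Once that is in place, everything else is routine, as you say.
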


\begin{ex} \label{ex:intro}
An example of a polyhedral function is depicted in Figure~\ref{fig:poly}, where the dotted line denotes $f^{-1}(c)$ for some constant $c > 0$. Here we set $C_j = C\left( \{ \mathbf{z}_{j,1},\mathbf{z}_{j,2} \} \right)$ for $1 \leq j \leq 6$, where
\begin{align*}
\mathbf{z}_{1,1} &= \textstyle\left( \frac{1}{4},\frac{1}{2} \right),
    & \mathbf{z}_{1,2} &= \textstyle\left( 0,\frac{1}{2} \right),
    & \mathbf{y}_1 &= (0,2), \\
\mathbf{z}_{2,1} = \mathbf{z}_{3,1} &= \textstyle\left( \frac{1}{4},\frac{1}{2} \right),
    & \mathbf{z}_{2,2} = \mathbf{z}_{3,2} &= \textstyle\left( \frac{1}{4},0 \right),
    & \mathbf{y}_2 = \mathbf{y}_3 &= (4,0), \\
\mathbf{z}_{4,1} &= \textstyle\left( \frac{1}{4},0 \right),
    & \mathbf{z}_{4,2} &= \textstyle\left( 0,-\frac{1}{2} \right),
    & \mathbf{y}_4 &= (4,-2), \\
\mathbf{z}_{5,1} &= \textstyle\left( -\frac{1}{2},0 \right),
    & \mathbf{z}_{5,2} &= \textstyle\left( 0,-\frac{1}{2} \right),
    & \mathbf{y}_5 &= (-2,-2), \\
\mathbf{z}_{6,1} &= \textstyle\left( 0,\frac{1}{2} \right),
    & \mathbf{z}_{6,2} &= \textstyle\left( -\frac{1}{2},0 \right),
    & \mathbf{y}_6 &= (-2,2).
\end{align*}
It is easy to check that the conditions \ref{it:polyhedral-cover} and \ref{it:polyhedral-compat} in Definition~\ref{defn:polyhedral} are satisfied.
\end{ex}


\begin{figure}[ht]
\begin{tikzpicture}
\newcommand{\xmax}{5.3}
\fill [violet!20] (0,\xmax) -- (0,0) -- (\xmax/2,\xmax);
\fill [orange!50!red!20] (\xmax/2,\xmax) -- (0,0) -- (\xmax,0) -- (\xmax,\xmax);
\fill [yellow!30] (0,0) rectangle (\xmax,-\xmax);
\fill [green!25] (0,0) rectangle (-\xmax,-\xmax);
\fill [blue!50!green!20] (0,0) rectangle (-\xmax,\xmax);
\draw [violet,->,ultra thick] (0,4.75) node [below right] {$C_1$} (0,0) -- (0,2) node [below right] {$\mathbf{y}_1$};
\draw [orange!50!red,->,ultra thick] (4.75,4.75) node [below left] {$C_2=C_3$} (0,0) -- (4,0) node [above left] {$\mathbf{y}_2=\mathbf{y}_3$};
\draw [yellow!60!black,->,ultra thick] (4.75,-4.75) node [above left] {$C_4$} (0,0) -- (4,-2) node [below left] {$\mathbf{y}_4$};
\draw [green!70!black,->,ultra thick] (-4.75,-4.75) node [above right] {$C_5$} (0,0) -- (-2,-2) node [left] {$\mathbf{y}_5$};
\draw [blue!50!green!70!black,->,ultra thick] (-4.75,4.75) node [below right] {$C_6$} (0,0) -- (-2,2) node [above] {$\mathbf{y}_6$};
\draw [dotted,ultra thick] (0,3) -- (1.5,3) -- (1.5,0) -- (0,-3) -- (-3,0) -- (0,3) -- cycle;
\draw [black,very thin] (0,0) -- (0,\xmax) (1,0) -- (1,\xmax) (2,2) -- (2,\xmax) (3,4) -- (3,\xmax) (0,0) -- (\xmax,0) (1,1) -- (\xmax,1) (1,2) -- (\xmax,2) (2,3) -- (\xmax,3) (2,4) -- (\xmax,4) (3,5) -- (\xmax,5) (0,0) -- (0,-\xmax) (1,0) -- (1,-\xmax) (2,0) -- (2,-\xmax) (3,0) -- (3,-\xmax) (4,0) -- (4,-\xmax) (5,0) -- (5,-\xmax) (-1,0) -- (-1,-\xmax) (-2,0) -- (-2,-\xmax) (-3,0) -- (-3,-\xmax) (-4,0) -- (-4,-\xmax) (-5,0) -- (-5,-\xmax) (0,0) -- (-\xmax,0) (0,1) -- (-\xmax,1) (0,2) -- (-\xmax,2) (0,3) -- (-\xmax,3) (0,4) -- (-\xmax,4) (0,5) -- (-\xmax,5);
\fill [white,path fading=fade bottom] (-\xmax-0.001,\xmax+0.001) rectangle (\xmax+0.001,4.7);
\fill [white,path fading=fade top] (-\xmax-0.001,-\xmax-0.001) rectangle (\xmax+0.001,-4.7);
\fill [white,path fading=fade right] (-\xmax-0.001,-\xmax-0.001) rectangle (-4.7,\xmax+0.001);
\fill [white,path fading=fade left] (\xmax+0.001,-\xmax-0.001) rectangle (4.7,\xmax+0.001);
\end{tikzpicture}
\caption{A representation of a polyhedral function $f\colon \R^2 \to \R$. See Examples \ref{ex:intro}, \ref{ex:section} and \ref{ex:biauto} for details.%
}
\label{fig:poly}
\end{figure}

\section{Geometry of polyhedral functions} \label{sec:geompoly}

Our first result on polyhedral functions says that the restriction of a polyhedral function to a linear subspace is also polyhedral.

\begin{lem} \label{lem:polyhedralsubsp}
Let $\theta\colon \R^n \to \R^N$ be a linear isometric embedding, and let $f\colon \R^N \to \R$ be a polyhedral function. Then $f \circ \theta\colon \R^n \to \R$ is a polyhedral function.
\end{lem}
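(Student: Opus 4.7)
The plan is to pull back the polyhedral data defining $f$ via $\theta$. If $f = \max_{1 \le j \le m} f_{C_j,\mathbf{y}_j}$ as in Definition~\ref{defn:polyhedral}, I will set $C_j' := \theta^{-1}(C_j) \subseteq \R^n$ and define $\mathbf{y}_j' \in \R^n$ to be the unique vector satisfying $\langle \mathbf{u}, \mathbf{y}_j'\rangle = \langle \theta(\mathbf{u}), \mathbf{y}_j\rangle$ for all $\mathbf{u} \in \R^n$ (such a $\mathbf{y}_j'$ exists by linearity of $\theta$). The task is then to check that the tuples $(C_j', \mathbf{y}_j')$ satisfy the axioms of Definition~\ref{defn:polyhedral} witnessing that $f \circ \theta$ is polyhedral.

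The step I expect to be the main obstacle is showing that each $C_j'$ is itself a polyhedral cone in $\R^n$: the generator description $C_j = C(Z_j)$ does not transport transparently under $\theta^{-1}$, and simply taking $C(\theta^{-1}(Z_j))$ would not in general recover $C_j'$. Here I would invoke Theorem~\ref{thm:coneequiv} to switch descriptions. Writing $C_j = \bigcap_{k=1}^{\beta_j} \{\mathbf{v} \in \R^N : \langle \mathbf{v},\mathbf{w}_{j,k}\rangle \ge 0\}$ as an intersection of finitely many linear halfspaces, and letting $\mathbf{w}_{j,k}' \in \R^n$ be the unique vector with $\langle \mathbf{u}, \mathbf{w}_{j,k}'\rangle = \langle \theta(\mathbf{u}), \mathbf{w}_{j,k}\rangle$, we obtain
\[
C_j' \;=\; \bigcap_{k=1}^{\beta_j} \{\mathbf{u} \in \R^n : \langle \mathbf{u},\mathbf{w}_{j,k}'\rangle \ge 0\}.
\]
Each factor is either a linear halfspace (when $\mathbf{w}_{j,k}' \neq \mathbf{0}$) or all of $\R^n$ (when $\mathbf{w}_{j,k}' = \mathbf{0}$), so by the converse direction of Theorem~\ref{thm:coneequiv}, $C_j'$ is a polyhedral cone.

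It then remains to verify the conditions \ref{it:polyhedral-cover}--\ref{it:polyhedral-defn} of Definition~\ref{defn:polyhedral} together with the positivity condition $\langle \mathbf{z},\mathbf{y}_j'\rangle > 0$ for $\mathbf{z} \in C_j' \setminus \{\mathbf{0}\}$. All of these reduce straightforwardly to the corresponding properties of the $(C_j,\mathbf{y}_j)$: the equivalence $\theta(\mathbf{u}) \in C_j \Leftrightarrow \mathbf{u} \in C_j'$ yields \ref{it:polyhedral-cover} and supplies the ``support'' part of \ref{it:polyhedral-defn}; the identity $\langle \theta(\mathbf{u}), \mathbf{y}_j\rangle = \langle \mathbf{u}, \mathbf{y}_j'\rangle$ yields \ref{it:polyhedral-compat} and, combined with the previous point, the pointwise equality $f_{C_j,\mathbf{y}_j}(\theta(\mathbf{u})) = f_{C_j',\mathbf{y}_j'}(\mathbf{u})$, from which \ref{it:polyhedral-defn} follows by taking maxima; and the positivity condition uses in addition the injectivity of $\theta$, which ensures $\theta(\mathbf{z}) \neq \mathbf{0}$ whenever $\mathbf{z} \neq \mathbf{0}$, so that $\langle \mathbf{z}, \mathbf{y}_j'\rangle = \langle \theta(\mathbf{z}), \mathbf{y}_j\rangle > 0$ whenever $\mathbf{z} \in C_j' \setminus \{\mathbf{0}\}$.
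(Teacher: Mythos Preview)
Your proposal is correct and follows essentially the same approach as the paper: pull back the cones via $\theta^{-1}$, use Theorem~\ref{thm:coneequiv} to verify they remain polyhedral, define the new $\mathbf{y}_j'$ via the adjoint relation $\langle \mathbf{u},\mathbf{y}_j'\rangle = \langle \theta(\mathbf{u}),\mathbf{y}_j\rangle$, and check the axioms. The only cosmetic difference is that the paper first reduces to the codimension-one case $N=n+1$ and writes $\mathbf{y}_j'$ explicitly as $\theta^{-1}(\mathbf{y}_j - \langle \mathbf{u},\mathbf{y}_j\rangle \mathbf{u})$ for a unit normal $\mathbf{u}$, whereas you handle arbitrary codimension directly; your formulation is cleaner and avoids the unnecessary induction.
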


\begin{proof}
Since any linear isometric embedding can be expressed as a composite of linear isometric embeddings $\theta'\colon \R^{n'} \to \R^{n'+1}$, it is enough to consider the case $N = n+1$. In particular, under this assumption there exists $\mathbf{u} \in \R^{n+1}$ with $\langle \mathbf{u},\mathbf{u} \rangle = 1$ such that $\theta(\R^n) = \{ \mathbf{v} \in \R^{n+1} \mid \langle \mathbf{v},\mathbf{u} \rangle = 0 \}$.

We first show that if $C \subseteq \R^{n+1}$ is a polyhedral cone then so is $\theta^{-1}(C) \subseteq \R^n$. Indeed, by Theorem~\ref{thm:coneequiv}, in that case we have $C = \bigcap_{j=1}^\beta K_j$ for some linear halfspaces $K_1,\ldots,K_\beta \subseteq \R^{n+1}$; for $1 \leq j \leq \beta$, let $\mathbf{w}_j \in \R^{n+1} \setminus \{\mathbf{0}\}$ be such that $K_j = \{ \mathbf{v} \in \R^{n+1} \mid \langle \mathbf{v},\mathbf{w}_j \rangle \geq 0 \}$. We then have $\mathbf{w}_j - \langle \mathbf{u},\mathbf{w}_j \rangle \mathbf{u} \in \theta(\R^n)$ and so we may set $\mathbf{w}_j' := \theta^{-1}(\mathbf{w}_j - \langle \mathbf{u},\mathbf{w}_j \rangle \mathbf{u})$; moreover, we set $K_j' := \{ \mathbf{v}' \in \R^n \mid \langle \mathbf{v}',\mathbf{w}_j' \rangle \geq 0 \}$, so that either $K_j' = \R^n$ or $K_j' \subseteq \R^n$ is a linear halfspace. Given any $\mathbf{v}' \in \R^n$ we have $\langle \theta(\mathbf{v}'),\mathbf{u} \rangle = 0$, and so
\begin{align*}
\mathbf{v}' \in K_j' &\iff \langle \mathbf{v}',\mathbf{w}_j' \rangle \geq 0 \iff \big\langle \theta(\mathbf{v}'), \mathbf{w}_j-\langle \mathbf{u},\mathbf{w}_j\rangle \mathbf{u} \big\rangle \geq 0 \iff \langle \theta(\mathbf{v}'), \mathbf{w}_j \rangle \geq 0 \\ &\iff \theta(\mathbf{v}') \in K_j,
\end{align*}
implying that $K_j' = \theta^{-1}(K_j)$. It follows that $\theta^{-1}(C) = \bigcap_{j=1}^\beta K_j'$ and so (by Theorem~\ref{thm:coneequiv}) $\theta^{-1}(C) \subseteq \R^n$ is a polyhedral cone, as claimed.

Now since $f\colon \R^{n+1} \to \R$ is polyhedral, there exist functions $f_{C_1,\mathbf{y}_1},\ldots,f_{C_m,\mathbf{y}_m}\colon \R^{n+1} \to \R$ as in Definition~\ref{defn:polyhedral}. For $1 \leq j \leq m$, we set $\mathbf{y}_j' := \theta^{-1}(\mathbf{y}_j - \langle \mathbf{u},\mathbf{y}_j \rangle \mathbf{u})$ and $C_j' := \theta^{-1}(C_j)$. Given any $\mathbf{z}' \in C_j'$ we have $\theta(\mathbf{z}') \in C_j$ and $\langle \theta(\mathbf{z}'),\mathbf{u} \rangle = 0$, and so
\[
\langle \mathbf{z}',\mathbf{y}_j' \rangle = \big\langle \theta(\mathbf{z}'),\mathbf{y}_j-\langle \mathbf{u},\mathbf{y}_j \rangle \mathbf{u} \big\rangle = \langle \theta(\mathbf{z}'),\mathbf{y}_j \rangle > 0,
\]
implying that we may define a function $f_{C_j',\mathbf{y}_j'}\colon \R^n \to \R$ as in Definition~\ref{defn:polyhedral}.

We claim that $f \circ \theta$ may be constructed from the functions $f_{C_j',\mathbf{y}_j'}$ as in Definition~\ref{defn:polyhedral}. Note first that we have
\[
\R^n = \theta^{-1}(\R^{n+1}) = \theta^{-1}\Bigg( \bigcup_{j=1}^m C_j \Bigg) = \bigcup_{j=1}^m \theta^{-1}(C_j) = \bigcup_{j=1}^m C_j',
\]
showing condition~\ref{it:polyhedral-cover}. Moreover, if $\mathbf{v}' \in C_j' \cap C_k'$ then we have $\theta(\mathbf{v}') \in C_j \cap C_k$ and so
\begin{align*}
\langle \mathbf{v}',\mathbf{y}_j' \rangle &= \big\langle \theta(\mathbf{v}'),\mathbf{y}_j-\langle \mathbf{u},\mathbf{y}_j \rangle \mathbf{u} \big\rangle = \langle \theta(\mathbf{v}'),\mathbf{y}_j \rangle \\ &= \langle \theta(\mathbf{v}'),\mathbf{y}_k \rangle = \big\langle \theta(\mathbf{v}'),\mathbf{y}_k-\langle \mathbf{u},\mathbf{y}_k \rangle \mathbf{u} \big\rangle = \langle \mathbf{v}',\mathbf{y}_k' \rangle,
\end{align*}
showing condition~\ref{it:polyhedral-compat}. Finally, for any $\mathbf{v}' \in \R^n$ we have $f_{C_j,\mathbf{y}_j} \circ \theta(\mathbf{v}') = 0 = f_{C_j',\mathbf{y}_j'}(\mathbf{v}')$ if $\mathbf{v}' \notin C_j'$, and
\[
f_{C_j,\mathbf{y}_j} \circ \theta(\mathbf{v}') = \langle \theta(\mathbf{v}'),\mathbf{y}_j \rangle = \big\langle \theta(\mathbf{v}'), \mathbf{y}_j-\langle \mathbf{u},\mathbf{y}_j \rangle \mathbf{u} \big\rangle = \langle \mathbf{v}',\mathbf{y}_j' \rangle = f_{C_j',\mathbf{y}_j'}(\mathbf{v}')
\]
if $\mathbf{v}' \in C_j'$, implying that $f_{C_j,\mathbf{y}_j} \circ \theta = f_{C_j',\mathbf{y}_j'}$. Thus $f \circ \theta(\mathbf{v}') = \max \{ f_{C_j',\mathbf{y}_j'}(\mathbf{v}') \mid 1 \leq j \leq m \}$ for all $\mathbf{v}' \in \R^n$, showing condition~\ref{it:polyhedral-defn}. It follows that $f \circ \theta$ is polyhedral, as required.
\end{proof}

We now turn our attention to group actions that preserve the values of a polyhedral function. In particular, in Proposition~\ref{prop:finite} we show that a polyhedral function cannot be $G$-invariant for any infinite subgroup $G \leq GL_n(\R)$. In order to prove this, we will use the following result.

\begin{lem} \label{lem:fixhplanes}
Let $\{ \mathbf{y}_j \mid j \in \mathcal{I} \}$ be a spanning set of $\R^n$, and let $A \in GL_n(\R)$ be such that $A(K_j) = K_j$, where $K_j := \{ \mathbf{v} \in \R^n \mid \langle \mathbf{v},\mathbf{y}_j \rangle = 1 \}$, for each $j \in \mathcal{I}$. Then $A = I_n$.
\end{lem}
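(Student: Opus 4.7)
The plan is to convert the setwise invariance of each affine hyperplane $K_j$ under $A$ into the statement that $A^T$ fixes each $\mathbf{y}_j$ as a vector (not merely up to scalar), and then to invoke the spanning hypothesis.

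First, I would rewrite the condition $A(K_j) \subseteq K_j$ analytically: every $\mathbf{v} \in \R^n$ with $\langle \mathbf{v}, \mathbf{y}_j \rangle = 1$ satisfies $\langle A\mathbf{v}, \mathbf{y}_j \rangle = \langle \mathbf{v}, A^T \mathbf{y}_j \rangle = 1$. The key observation is that $K_j$ is an affine hyperplane not passing through the origin (since $\mathbf{y}_j \neq \mathbf{0}$, as it must be in order for $K_j$ to be well-defined), which makes the constraint rigid. Pick any $\mathbf{v}_j \in K_j$ and write an arbitrary point of $K_j$ as $\mathbf{v}_j + \mathbf{w}$ with $\mathbf{w} \in \mathbf{y}_j^\perp$. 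Subtracting the identity $\langle \mathbf{v}_j, A^T \mathbf{y}_j \rangle = 1$ from $\langle \mathbf{v}_j + \mathbf{w}, A^T\mathbf{y}_j \rangle = 1$ gives $\langle \mathbf{w}, A^T\mathbf{y}_j \rangle = 0$ for all $\mathbf{w} \perp \mathbf{y}_j$, so $A^T\mathbf{y}_j \in \R\mathbf{y}_j$; say $A^T\mathbf{y}_j = c_j \mathbf{y}_j$. Substituting this back and using $\langle \mathbf{v}_j, \mathbf{y}_j \rangle = 1$ yields $c_j = 1$, so $A^T\mathbf{y}_j = \mathbf{y}_j$ for every $j \in \mathcal{I}$.

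Since $\{\mathbf{y}_j \mid j \in \mathcal{I}\}$ spans $\R^n$ by hypothesis, the linear map $A^T$ fixes a spanning set, hence $A^T = I_n$, and therefore $A = I_n$.

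I do not anticipate any real obstacle: the argument is a short calculation. The only subtle point worth flagging is that the hyperplanes $K_j$ are affine and avoid the origin, which is exactly what upgrades the conclusion from ``$\mathbf{y}_j$ is an eigenvector of $A^T$'' (which would be all one could extract from a linear hyperplane through the origin) to ``$\mathbf{y}_j$ is a genuine fixed vector of $A^T$''; that upgrade is essential for the spanning hypothesis to yield $A^T = I_n$.
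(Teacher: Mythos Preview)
Your argument is correct, and it takes a genuinely different route from the paper's. The paper extracts from the spanning set a basis $\{\mathbf{y}_{j_1},\ldots,\mathbf{y}_{j_n}\}$, passes to its dual basis $\{\mathbf{z}_1,\ldots,\mathbf{z}_n\}$, and observes that the intersection of all $n$ hyperplanes $K_{j_\ell}$ is the single point $\mathbf{z} = \sum_k \mathbf{z}_k$, while the intersection of any $n-1$ of them is the line $\mathbf{z} + \R\mathbf{z}_k$; invariance of these intersections under $A$ forces $A\mathbf{z}_k = \lambda_k \mathbf{z}_k$ and $\sum_k \lambda_k \mathbf{z}_k = \sum_k \mathbf{z}_k$, whence each $\lambda_k = 1$. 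Your approach sidesteps the dual basis entirely by passing to the transpose: the affine condition $\langle \mathbf{v}, A^T\mathbf{y}_j \rangle = 1$ on all of $K_j$ pins down $A^T\mathbf{y}_j = \mathbf{y}_j$ exactly, and then the spanning hypothesis finishes immediately. Your proof is shorter and uses only the inclusion $A(K_j) \subseteq K_j$; the paper's version is more geometric, locating $A$'s fixed point and eigendirections explicitly inside the arrangement of hyperplanes. Both exploit the same essential fact you flagged---that the hyperplanes miss the origin, so preserving them is a codimension-zero constraint rather than a codimension-one constraint.
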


\begin{proof}
Since the $\mathbf{y}_j$ span $\R^n$, some subset $\{ \mathbf{y}_{j_1},\ldots,\mathbf{y}_{j_n} \}$ of the $\mathbf{y}_j$ form a basis for $\R^n$. Now for each $k \in \{ 1,\ldots,n \}$, there exists a unique $\mathbf{z}_k \in \R^n$ such that $\langle \mathbf{z}_k,\mathbf{y}_{j_k} \rangle = 1$ and $\langle \mathbf{z}_k,\mathbf{y}_{j_\ell} \rangle = 0$ for all $\ell \neq k$; moreover, it is easy to see that $\{ \mathbf{z}_1,\ldots,\mathbf{z}_n \}$ is a basis for $\R^n$---the basis \emph{dual} to $\{ \mathbf{y}_{j_1},\ldots,\mathbf{y}_{j_n} \}$.

Now let $\mathbf{z} = \sum_{k=1}^n \mathbf{z}_k$. It is then easy to verify that $\bigcap_{\ell=1}^n K_{j_\ell} = \{\mathbf{z}\}$, and that $\bigcap_{1 \leq \ell \leq n, \ell \neq k} K_{j_\ell} = \{ \mathbf{z}+\lambda \mathbf{z}_k \mid \lambda \in \R \}$ for all $k$. As $A(K_{j_\ell}) = K_{j_\ell}$ for all $\ell$, it thus follows that $A\mathbf{z} = \mathbf{z}$ and $\{ A\mathbf{z}+\lambda A\mathbf{z}_k \mid \lambda \in \R \} = \{ \mathbf{z}+\lambda \mathbf{z}_k \mid \lambda \in \R \}$ for all $k$. This implies that for each $k$, there exists $\lambda_k \in \R \setminus \{0\}$ such that $A\mathbf{z}_k = \lambda_k \mathbf{z}_k$. But then we have
\[
\sum_{k=1}^n \mathbf{z}_k = \mathbf{z} = A\mathbf{z} = \sum_{k=1}^n A\mathbf{z}_k = \sum_{k=1}^n \lambda_k \mathbf{z}_k.
\]
As the $\mathbf{z}_k$ are linearly independent, it follows that $\lambda_k = 1$, and so $A\mathbf{z}_k = \mathbf{z}_k$, for all $k$. As the $\mathbf{z}_k$ span $\R^n$, it thus follows that $A = I_n$, as required.
\end{proof}

\begin{prop} \label{prop:finite}
Let $f\colon \R^n \to \R$ be a polyhedral function, and let $G \leq GL_n(\R)$ be a subgroup. If $f(\mathbf{v}) = f(A\mathbf{v})$ for all $\mathbf{v} \in \R^n$ and all $A \in G$, then $G$ is finite.
\end{prop}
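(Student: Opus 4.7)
The plan is to extract from $f$ a finite combinatorial structure that any $A \in G$ must permute, and then invoke Lemma~\ref{lem:fixhplanes} to see that the kernel of this permutation action is trivial.

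First I would normalise the defining data of $f$. Starting from a presentation $f = \max_j f_{C_j,\mathbf{y}_j}$ as in Definition~\ref{defn:polyhedral}, any $C_j$ of empty interior can be discarded without changing $f$, since condition~\ref{it:polyhedral-compat} means its contribution is already subsumed by that of a full-dimensional $C_k$ containing $C_j$ in its boundary, and any two cones sharing the same $\mathbf{y}$-vector can be merged. After this reduction I may assume each $C_j$ is $n$-dimensional and the $\mathbf{y}_j$ are pairwise distinct. A short argument then shows $\{\mathbf{y}_1,\ldots,\mathbf{y}_m\}$ spans $\R^n$: if some nonzero $\mathbf{v}$ were orthogonal to every $\mathbf{y}_j$, then picking $j_0$ with $\mathbf{v} \in C_{j_0}$, the defining inequality $\langle \mathbf{z},\mathbf{y}_{j_0} \rangle > 0$ for all nonzero $\mathbf{z} \in C_{j_0}$ would give $\langle \mathbf{v},\mathbf{y}_{j_0} \rangle > 0$, a contradiction.

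Next, for each $A \in G$ I would build a permutation $\sigma_A$ of $\{1,\ldots,m\}$ with $A(C_j) = C_{\sigma_A(j)}$ and $(A^{-1})^T \mathbf{y}_j = \mathbf{y}_{\sigma_A(j)}$. The key point is that on $\mathrm{int}(C_j)$ the function $f$ coincides with the linear functional $\langle {-}, \mathbf{y}_j \rangle$; since $f \circ A^{-1} = f$, on the open set $A(\mathrm{int}(C_j))$ we have $f(\mathbf{w}) = \langle \mathbf{w}, (A^{-1})^T \mathbf{y}_j \rangle$. Whenever $A(\mathrm{int}(C_j)) \cap \mathrm{int}(C_k) \neq \emptyset$, equating the two linear expressions for $f$ on this open set gives $(A^{-1})^T \mathbf{y}_j = \mathbf{y}_k$, so by distinctness of the $\mathbf{y}_k$'s there is at most one such $k$. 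The leftover open set $A(\mathrm{int}(C_j)) \setminus C_k$ then lies inside the measure-zero union $\bigcup_{\ell \neq k} \partial C_\ell$ and so is empty, giving $A(C_j) \subseteq C_k$; symmetry upgrades this to equality, and we set $\sigma_A(j) = k$. A direct computation then shows that the affine hyperplanes $K_j = \{\mathbf{v} : \langle \mathbf{v}, \mathbf{y}_j \rangle = 1\}$ satisfy $A(K_j) = K_{\sigma_A(j)}$.

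Finally, $A \mapsto \sigma_A$ is a homomorphism $\rho\colon G \to \Sym(\{K_1,\ldots,K_m\})$ into a finite group. Its kernel consists of those $A \in G$ with $A(K_j) = K_j$ for every $j$, and Lemma~\ref{lem:fixhplanes}---applicable because $\{\mathbf{y}_j\}$ spans $\R^n$---forces this kernel to be trivial. Hence $G$ embeds into $\Sym(\{K_1,\ldots,K_m\})$, and so is finite. The main obstacle I expect lies in the bookkeeping of the middle paragraph: one must carry out the normalisation precisely enough that $j \mapsto \sigma_A(j)$ really is a well-defined bijection, and handle the possibility that $A(\mathrm{int}(C_j))$ meets several of the $C_k$'s by ruling out contributions from their lower-dimensional boundaries.
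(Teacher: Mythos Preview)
Your proposal is correct and follows essentially the same strategy as the paper: restrict to full-dimensional cones, show the $\mathbf{y}_j$ span $\R^n$, exhibit a $G$-action on the finite set of affine hyperplanes $K_j = \{\mathbf{v} : \langle \mathbf{v}, \mathbf{y}_j\rangle = 1\}$, and invoke Lemma~\ref{lem:fixhplanes} to see the action is faithful.

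The one genuine difference lies in how the permutation action is established. Rather than normalising the data and proving the stronger statement that $A$ permutes the cones $C_j$ themselves, the paper characterises the set $\mathcal{K} = \{K_j : j \in \mathcal{I}\}$ \emph{intrinsically} as precisely those affine hyperplanes $K$ for which $K \cap f^{-1}(1)$ has nonempty interior in $K$; since $G$ preserves $f^{-1}(1)$, the $G$-invariance of $\mathcal{K}$ is then immediate. This sidesteps your merging step (whose output is no longer a polyhedral cone in the sense of Definition~\ref{defn:polyhedral}, though your argument does not actually need convexity---only that each merged region is closed with dense interior and nowhere-dense boundary) and the bookkeeping you flag in your final paragraph. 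Your route, on the other hand, delivers the bonus that the normalised cone decomposition itself is $G$-equivariant, at the cost of that extra care.
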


\begin{proof}
We use the notation of Definition~\ref{defn:polyhedral}. In what follows, an \emph{affine hyperplane} is a subset of $\R^n$ of the form $\{ \mathbf{v} \in \R^n \mid \langle \mathbf{v},\mathbf{y} \rangle = c \}$ for some $\mathbf{y} \in \R^n \setminus \{\mathbf{0}\}$ and $c \in \R$.

Let $\mathcal{I} \subseteq \{1,\ldots,m\}$ be the set of all $j$ such that $C_j \subseteq \R^n$ has non-empty interior (if $C_j = C(Z_j)$, this is equivalent to saying that $Z_j$ spans $\R^n$). Then, by condition~\ref{it:polyhedral-cover} in Definition~\ref{defn:polyhedral}, we have $\R^n \setminus \bigcup_{j \in \mathcal{I}} C_j \subseteq  \bigcup_{j \notin \mathcal{I}} C_j$, where the left hand-side is open and the right hand side is a finite union of convex subsets with empty interior; this implies that the left hand side is actually empty, and so $\R^n = \bigcup_{j \in \mathcal{I}} C_j$.

Therefore, conditions \ref{it:polyhedral-compat} and \ref{it:polyhedral-defn} imply that $f^{-1}(1) \subseteq \bigcup_{j \in \mathcal{I}} K_j$, where we set $K_j := \{ \mathbf{v} \in \R^n \mid \langle \mathbf{v},\mathbf{y}_j \rangle = 1 \}$. Moreover, for each $j \in \mathcal{I}$, the set $K_j \cap f^{-1}(1)$ has non-empty interior in $K_j$. The converse is also true: if $K$ is an affine hyperplane not equal to any $K_j$ for $j \in \mathcal{I}$, then we have $K \cap f^{-1}(1) \subseteq K \cap \bigcup_{j \in \mathcal{I}} K_j = \bigcup_{j \in \mathcal{I}} (K \cap K_j)$, and the latter is a finite union of $(n-2)$-dimensional affine subspaces which therefore must have empty interior in $K$. Thus, the set $\mathcal{K} := \{ K_j \mid j \in \mathcal{I} \}$  of affine hyperplanes consists of precisely those $K$ for which $K \cap f^{-1}(1)$ has non-empty interior in $K$.

Now the group $G$ has a canonical action on the set of all affine hyperplanes in $\R^n$. Moreover, if $A \in G$ then by the assumptions $A\left( f^{-1}(1) \right) = f^{-1}(1)$, implying that if $K \cap f^{-1}(1)$ has non-empty interior in an affine hyperplane $K$, then $A(K) \cap f^{-1}(1)$ has non-empty interior in $A(K)$. Thus the set $\mathcal{K}$ is $G$-invariant, and so we have homomorphism $\Phi\colon G \to \Sym(\mathcal{K})$. As $\mathcal{K}$ is finite, it is then enough to show that $\Phi$ is injective.

We first claim that the set $\{ \mathbf{y}_j \mid j \in \mathcal{I} \}$ spans $\R^n$. Suppose for contradiction that the set $\{ \mathbf{y}_j \mid j \in \mathcal{I} \}$ spans a proper subspace of $\R^n$. Then there exists $\mathbf{u} \in \R^n \setminus \{\mathbf{0}\}$ such that $\langle \mathbf{u},\mathbf{y}_j \rangle = 0$ for all $j \in \mathcal{I}$. But, as shown above, we have $\R^n = \bigcup_{j \in \mathcal{I}} C_j$, and so in that case $\mathbf{u} \in C_k$ for some $k \in \mathcal{I}$. This is impossible, as we have $\langle \mathbf{z},\mathbf{y}_k \rangle > 0$ for all $\mathbf{z} \in C_k \setminus \{\mathbf{0}\}$. Thus indeed $\{ \mathbf{y}_j \mid j \in \mathcal{I} \}$ spans $\R^n$, as claimed.

Now let $A \in \ker(\Phi)$, so that $A(K_j) = K_j$ for all $j \in \mathcal{I}$. It then follows from Lemma~\ref{lem:fixhplanes} that $A = I_n$. Therefore, $\Phi\colon G \to \Sym(\mathcal{K})$ is injective and so $G$ is finite, as required.
\end{proof}

\begin{ex} \label{ex:section}
Let $f$ be the polyhedral function depicted in Figure~\ref{fig:poly}, and $\theta\colon \R \to \R^2$ be an isometric embedding whose image is the diagonal $\{ (v,v) \mid v \in \R \} \subset \R^2$. We may then check that $f\circ \theta(v) = 2\sqrt{2} \left| v \right|$ for all $v \in \R$, and so $f \circ \theta$ is polyhedral, as per Lemma~\ref{lem:polyhedralsubsp}: in the notation of Definition~\ref{defn:polyhedral} we could take
\begin{align*}
C_1 &:= C(\{1\}) = [0,\infty), & \mathbf{y}_1 &:= 2\sqrt{2}, \\ C_2 &:= C(\{-1\}) = (-\infty,0], & \mathbf{y}_2 &:= -2\sqrt{2}
\end{align*}
to define $f \circ \theta$.

A straightforward calculation shows that for any non-identity matrix $A \in GL_2(\R)$ there exists $\mathbf{v} \in \R^2$ such that $f(A\mathbf{v}) \neq f(\mathbf{v})$, and so if $G \leq GL_2(\R)$ is such that $f(\mathbf{v}) = f(A\mathbf{v})$ for all $\mathbf{v} \in \R^2$ and $A \in G$, then $G$ must be trivial. On the other hand, $f \circ \theta(-v) = f \circ \theta(v)$ for all $v \in \R$. Nevertheless, if $G \leq GL_1(\R)$ is a subgroup such that $f \circ \theta(v) = f \circ \theta(Av)$ for all $v \in \R$ and $A \in G$, then we must have $G \leq \left\{ \begin{pmatrix}1\end{pmatrix},\begin{pmatrix}-1\end{pmatrix} \right\}$, and so $G$ is still finite, as per Proposition~\ref{prop:finite}.
\end{ex}

\section{A polyhedral function associated to a biautomatic structure on \texorpdfstring{$\Z^n$}{Z{\textasciicircum}n}} \label{sec:biauto-poly}

In this section, we associate to any biautomatic structure $(X,\mathcal{L})$ on $\Z^N$ a polyhedral function. Our aim is to do this in such a way that given an element $\mathbf{v} \in \Z^N$ represented by a word $U \in \mathcal{L}$, the length $|U|$ of $U$ can be roughly approximated by $f(\mathbf{v})$: see Proposition~\ref{prop:biauto-poly}.

We first need the following auxiliary result.

\begin{lem} \label{lem:indep}
Let $(X,\mathcal{L})$ be a finite-to-one biautomatic structure on $\Z^N$, and suppose that there exist $U_0,V_1,U_1,\ldots,U_\alpha,V_\alpha \in X^*$ such that $U_0 \cdot V_1^* \cdot U_1 \cdot {} \cdots {} \cdot V_\alpha^* \cdot U_\alpha \subseteq \mathcal{L}$. Then the set $\{ \mathbf{z}_1,\ldots,\mathbf{z}_\alpha \} \subseteq \Q^N$, where $\mathbf{z}_j = \overline{V_j} / |V_j|$, is linearly independent. [For the avoidance of doubt, we do allow having $\mathbf{z}_j = \mathbf{z}_{j'}$ for $j \neq j'$---that is, we claim that the $\mathbf{z}_j$ become linearly independent after deleting repetitions.]
\end{lem}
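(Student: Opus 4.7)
The plan is to argue by contradiction. Suppose the distinct vectors among $\mathbf{z}_1, \ldots, \mathbf{z}_\alpha$---call them $w_1, \ldots, w_s$---satisfy some non-trivial linear relation $\sum_{i=1}^s e_i w_i = \mathbf{0}$ with all $e_i \neq 0$; after clearing denominators assume $e_i \in \Z$. I would split the argument into a length-only step, which forces $\sum_i e_i = 0$ (so the $e_i$ have mixed signs), followed by a fellow-traveller step, which then rules out the relation.

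\emph{Similar-lengths step.} For any $(c_j) \in \Z^\alpha$ with $\sum_j c_j \overline{V_j} = \mathbf{0}$ and any $M \in \Z_{\geq 1}$, the tuples $\vec{k}^\pm := \bigl(M \max(\pm c_j, 0)\bigr)_j \in \Z_{\geq 0}^\alpha$ yield words $W^\pm := U_0 V_1^{k_1^\pm} U_1 \cdots V_\alpha^{k_\alpha^\pm} U_\alpha \in \mathcal{L}$ representing the same element of $\Z^N$. By Lemma~\ref{lem:simlengths}, $\bigl||W^+| - |W^-|\bigr| = M \bigl|\sum_j c_j |V_j|\bigr| \leq \kappa$; letting $M \to \infty$ forces $\sum_j c_j |V_j| = 0$. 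Dualising, there exists $\mu \in \R^N$ with $\mathbf{z}_j \cdot \mu = 1$ for every $j$, and pairing the assumed relation with $\mu$ yields $\sum_i e_i = \mu \cdot \sum_i e_i w_i = 0$, so both $E^\pm := \{i : \pm e_i > 0\}$ are non-empty. (In particular, every $\mathbf{z}_j$ must be non-zero, since otherwise taking $c_j = 1$ and the other entries zero would give $|V_j| = 0$, contradicting $V_j$ being non-empty.)

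\emph{Fellow-traveller step.} Pick representatives $j(i)$ with $\mathbf{z}_{j(i)} = w_i$ and set $c_{j(i)} := e_i \prod_{i' \neq i} |V_{j(i')}|$, with $c_j := 0$ for other $j$; this gives $(c_j) \in \Z^\alpha$ with $\sum_j c_j \overline{V_j} = 0$. For any $M \geq 1$ the words $W^\pm$ constructed as above have common length $L = M \bigl(\prod_{i'} |V_{j(i')}|\bigr) \sum_{i \in E^+} e_i + O(1) = \Theta(M)$ and represent the same element, so they fellow-travel. Crucially, the non-trivial $V$-blocks of $W^+$ occur only at indices $j(i)$ with $i \in E^+$ and carry slopes drawn from $\{w_i : i \in E^+\}$; those of $W^-$ use the \emph{disjoint} slope set $\{w_i : i \in E^-\}$. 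A direct computation shows that whenever $W^\pm$ is inside its respective long block of slope $s^\pm$ at time $t$,
\[
\overline{W^\pm(t)} = t\, s^\pm + M\, B^\pm + O(1)
\]
for a constant vector $B^\pm \in \R^N$ depending only on which long block contains $t$. The fellow-traveller bound $\bigl|\overline{W^+(t)} - \overline{W^-(t)}\bigr|_X \leq \lambda$ therefore yields $t(s^+ - s^-) + M(B^+ - B^-) = O(1)$; since $s^+ \neq s^-$, this confines $t$ to an interval of length $O(1)$ independent of $M$.

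Summing over the at most $|E^+| \cdot |E^-|$ pairs of long blocks, the total time during which both $W^\pm$ lie inside a long $V$-block is $O(1)$. On the other hand each of $W^\pm$ spends all but $O(1)$ of its total length inside such a block, so that overlap is at least $L - O(1) = \Theta(M)$, contradicting the previous bound once $M$ is large. The main technical obstacle I anticipate is the uniform-in-$M$ bookkeeping of the $O(1)$ corrections---those from partial traversals of a single $V_j$, from the fixed $U_i$-segments, and from passing between the $X$-metric on $\Z^N$ and the Euclidean one---so that none of them can absorb the $\Theta(M)$ terms used in either the slope identification or the final counting step.
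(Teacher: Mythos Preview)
Your argument is correct, and its overall contradiction-via-fellow-traveller strategy matches the paper's, but the execution of both steps differs from the paper in interesting ways.

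For the reduction to mixed signs, the paper simply observes that if all coefficients are non-negative then the words $U_0 V_1^{\beta\mu_1/|V_1|} U_1 \cdots U_\alpha$ (for varying $\beta$) give infinitely many $\mathcal{L}$-representatives of a single element, contradicting finite-to-one directly. Your route---using Lemma~\ref{lem:simlengths} to force $\sum_j c_j |V_j|=0$ whenever $\sum_j c_j\overline{V_j}=0$, then dualising to obtain $\mu$ with $\langle\mathbf{z}_j,\mu\rangle=1$---is more elaborate but has the pleasant side effect of producing exactly the kind of vector $\mu$ that later appears as $\mathbf{y}_j$ in the construction of the polyhedral function (Proposition~\ref{prop:biauto-poly}).

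For the fellow-traveller step, the paper makes a single, sharper choice: it looks only at the \emph{first} nontrivial block of each of $W^\pm$, so that the ``$M B^\pm$'' terms in your expansion vanish identically, and then picks $t=t(\beta)$ growing linearly in $\beta$ inside both of those first blocks; the difference $\overline{W^+(t)}-\overline{W^-(t)}$ is then $\beta\mu(\mathbf{z}_{j_+}-\mathbf{z}_{j_-})+O(1)$, which diverges. Your global counting argument (overlap time is simultaneously $O(1)$ and $\Theta(M)$) reaches the same contradiction but with heavier bookkeeping; the paper's choice of $t$ lets it avoid tracking the $M B^\pm$ offsets and the sum over block pairs entirely. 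Both approaches are valid; the paper's is shorter, while yours is more robust to not knowing in advance which block to look at.
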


\begin{proof}
Suppose for contradiction that the set $\{ \mathbf{z}_1,\ldots,\mathbf{z}_\alpha \}$ is not independent. Then there exist $\mu_1,\ldots,\mu_\alpha \in \R$, not all zero, such that if $\mathbf{z}_j = \mathbf{z}_{j'}$ for some $1 \leq j < j' \leq \alpha$ then $\mu_{j'} = 0$, and such that
\[
\sum_{j=1}^\alpha \mu_j \mathbf{z}_j = 0.
\]
Since $\mathbf{z}_j \in \Q^N$ we can also choose $\mu_j \in \Q$ for all $j$. Without loss of generality, assume also that $\mu_j > 0$ for some $j$, and (by rescaling the $\mu_j$ if necessary) that $\frac{\mu_j}{|V_j|} \in \Z$ for all $j$. We now consider two cases---depending on whether or not $\mu_j < 0$ for some $j$---obtaining a contradiction in each.

Suppose first that $\mu_j \geq 0$ for all $j$. It then follows that for each $\beta \in \Z_{\geq 0}$, the word
\[
U_0 V_1^{\beta\mu_1/|V_1|} U_1 \cdots V_\alpha^{\beta\mu_\alpha/|V_\alpha|} U_\alpha \in \mathcal{L}
\]
represents the element $\sum_{j=0}^\alpha \overline{U_j} \in \Z^N$. As $\mu_j > 0$ for some $j$, this gives infinitely many words representing a single element of $\mathcal{L}$, contradicting the fact that $(X,\mathcal{L})$ is finite-to-one.

Suppose now that, on the contrary, $\mu_j < 0$ for some $j$. Let $\mathcal{I}_+ = \{ j \mid \mu_j > 0 \}$ and $\mathcal{I}_- = \{ j \mid \mu_j < 0 \}$; by the assumptions, both $\mathcal{I}_+$ and $\mathcal{I}_-$ are non-empty. Then for each $\beta \in \Z_{\geq 0}$, the words
\begin{align*}
W_\beta^+ &:= U_0 V_1^{\beta\mu_1^+/|V_1|} U_1 \cdots V_\alpha^{\beta\mu_\alpha^+/|V_\alpha|} U_\alpha \in \mathcal{L}
\shortintertext{and}
W_\beta^- &:= U_0 V_1^{\beta\mu_1^-/|V_1|} U_1 \cdots V_\alpha^{\beta\mu_\alpha^-/|V_\alpha|} U_\alpha \in \mathcal{L},
\end{align*}
where, for $\varepsilon\in\{\pm\}$, $\mu_j^\varepsilon = |\mu_j|$ if $j \in \mathcal{I}_\varepsilon$ and $\mu_j^\varepsilon = 0$ otherwise, represent the same element of $\Z^N$. This means that $W_\beta^+$ and $W_\beta^-$ satisfy the fellow traveller property (see Definition~\ref{defn:biauto}) for some constant $\lambda \geq 0$ independent of $\beta$.

Now let $j_+ = \min \mathcal{I}_+$ and $j_- = \min \mathcal{I}_-$. Then the prefixes of $W_\beta^+$ and $W_\beta^-$ of length $t = t(\beta) = \beta \mu + \sum_{j=0}^\alpha |U_j|$, where $\mu = \min \{\mu_{j_+},-\mu_{j_-}\}
$, are
\begin{align*}
W_\beta^+(t) &:= U_0 \cdots U_{j_+-1} V_{j_+}^{\left\lfloor \beta\mu/|V_{j_+}| \right\rfloor} Y_\beta^+
\shortintertext{and}
W_\beta^-(t) &:= U_0 \cdots U_{j_--1} V_{j_-}^{\left\lfloor \beta\mu/|V_{j_-}| \right\rfloor} Y_\beta^-
\end{align*}
respectively, where $Y_\beta^+$ and $Y_\beta^-$ are some words of length $\leq |V_{j_+}| + |V_{j_-}| + \sum_{j=0}^\alpha |U_j|$. But this means that $\overline{W_\beta^+(t)} - \overline{W_\beta^-(t)}$ is bounded distance away from the point $\frac{\beta\mu}{|V_{j_+}|}\overline{V_{j_+}} - \frac{\beta\mu}{|V_{j_-}|}\overline{V_{j_-}} = \beta\mu(\mathbf{z}_{j_-}-\mathbf{z}_{j_+}) \in \Q^N$ with respect to any fixed norm on $\Q^N$. As by assumptions $\mu > 0$ and $\mathbf{z}_{j_+} \neq \mathbf{z}_{j_-}$, it follows that $\overline{W_\beta^+(t)} - \overline{W_\beta^-(t)}$ is arbitrarily far from the origin for large $\beta$, contradicting the fellow traveller property.\end{proof}

\begin{prop} \label{prop:biauto-poly}
Let $(X,\mathcal{L})$ be a finite-to-one biautomatic structure on $\Z^N$. Then there exist a polyhedral function $f\colon \R^N \to \R$ and a constant $\xi \geq 0$ such that
\[
f(\overline{U}) - \xi \leq \left| U \right| \leq f(\overline{U}) + \xi
\]
for all $U \in \mathcal{L}$.
\end{prop}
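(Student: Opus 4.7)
The plan is to construct $f$ from the finite state automaton $\mathfrak{A}$ recognising $\mathcal{L}$, with Lemma~\ref{lem:indep} supplying the key geometric input.

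\textbf{Templates, cones and functionals.} By the standard pumping decomposition of paths in a finite automaton, I identify a finite family $\mathcal{P}$ of \emph{templates} $P = (U_0^P, V_1^P, U_1^P, \ldots, V_{\alpha_P}^P, U_{\alpha_P}^P)$, with each $V_j^P$ labelling a simple loop, satisfying $U_0^P V_1^{P,*} U_1^P \cdots V_{\alpha_P}^{P,*} U_{\alpha_P}^P \subseteq \mathcal{L}$, such that every $U \in \mathcal{L}$ arises as $U_0^P V_1^{P,n_1} U_1^P \cdots V_{\alpha_P}^{P, n_{\alpha_P}} U_{\alpha_P}^P$ for some $P \in \mathcal{P}$ and $n_j \in \Z_{\geq 0}$. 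By Lemma~\ref{lem:indep} the vectors $\overline{V_j^P}/|V_j^P|$ are linearly independent after removing duplicates, so there is a unique $\mathbf{y}_P \in \mathrm{span}\{\overline{V_j^P}\}$ (zero on the orthogonal complement) with $\langle \overline{V_j^P}, \mathbf{y}_P \rangle = |V_j^P|$ for every $j$. Setting $C_P := C(\{\overline{V_j^P}\})$ yields the positivity $\langle \mathbf{z}, \mathbf{y}_P \rangle > 0$ on $C_P \setminus \{\mathbf{0}\}$, and a direct computation gives
\[
|U| = \langle \overline{U}, \mathbf{y}_P \rangle + c_P, \qquad c_P := \sum_j |U_j^P| - \bigg\langle \sum_j \overline{U_j^P}, \mathbf{y}_P \bigg\rangle,
\]
whenever $U \in \mathcal{L}$ has template $P$.

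\textbf{Polyhedral structure and compatibility (the main obstacle).} I set $f(\mathbf{v}) := \max_P f_{C_P, \mathbf{y}_P}(\mathbf{v})$ and verify the conditions of Definition~\ref{defn:polyhedral}. For coverage, pigeonhole on the finite family $\mathcal{P}$ shows that for any rational direction $\mathbf{d}$ some template $P$ handles infinitely many multiples $n\mathbf{d}$, placing $n\mathbf{d} - \sum \overline{U_j^P} \in C_P$ and hence $\mathbf{d} \in C_P$ in the limit; density of rational directions and closedness of each $C_P$ give $\bigcup_P C_P = \R^N$. The subtle step is compatibility: for $\mathbf{v} \in C_P \cap C_{P'}$ rational, write $\mathbf{v} = \sum a_j \overline{V_j^P} = \sum a'_j \overline{V_j^{P'}}$ with $a_j, a'_j \in \Q_{\geq 0}$ and pick $N$ clearing denominators. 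The template words $W_N^P$ and $W_N^{P'}$ with loop exponents $Na_j$ and $Na'_j$ represent elements of $\Z^N$ at bounded Cayley-distance from one another, and chaining Lemma~\ref{lem:simlengths} through $\mathcal{L}$-representatives along a bounded geodesic connecting these elements yields $\bigl||W_N^P| - |W_N^{P'}|\bigr| \leq K$ for a constant $K$ independent of $N$. Substituting the length formula and dividing by $N$ as $N \to \infty$ gives $\langle \mathbf{v}, \mathbf{y}_P \rangle = \langle \mathbf{v}, \mathbf{y}_{P'} \rangle$, extended to all $\mathbf{v} \in C_P \cap C_{P'}$ by continuity.

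\textbf{Two-sided bound.} Both inequalities in $\bigl||U| - f(\overline{U})\bigr| \leq \xi$ reduce to the single claim that $|U| = \langle \overline{U}, \mathbf{y}_{P'} \rangle + O(1)$ for any template $P'$ with $\overline{U} \in C_{P'}$. Given such $P'$, write $\overline{U} = \sum a_j \overline{V_j^{P'}}$ with $a_j \geq 0$ (unique after deduplication), set $n_j := \lfloor a_j \rfloor$, and form $W := U_0^{P'} V_1^{P',n_1} U_1^{P'} \cdots V_{\alpha_{P'}}^{P',n_{\alpha_{P'}}} U_{\alpha_{P'}}^{P'} \in \mathcal{L}$. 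Then $\overline{W}$ lies at bounded Cayley-distance from $\overline{U}$ and $|W| = \langle \overline{U}, \mathbf{y}_{P'} \rangle + O(1)$; chaining Lemma~\ref{lem:simlengths} across a bounded geodesic from $\overline{W}$ to $\overline{U}$ yields $\bigl||U| - |W|\bigr| = O(1)$, hence $|U| = \langle \overline{U}, \mathbf{y}_{P'} \rangle + O(1)$. Specialising $P'$ to the template of $U$ (when $\overline{U} \in C_P$) and to the maximiser in $f(\overline{U})$ gives the two required inequalities, and absorbing the finitely many small-$|U|$ cases into $\xi$ completes the proof.
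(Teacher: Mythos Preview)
Your overall architecture is the paper's: decompose $\mathcal{L}$ as a finite union of sets $U_0 V_1^* U_1 \cdots V_\alpha^* U_\alpha$, use Lemma~\ref{lem:indep} to produce the functionals $\mathbf{y}_P$ and cones $C_P$, set $f = \max_P f_{C_P,\mathbf{y}_P}$, and then verify Definition~\ref{defn:polyhedral} and the length estimate via Lemma~\ref{lem:simlengths}. Your coverage and compatibility arguments are essentially those of Lemma~\ref{lem:polyhedral}; for the final two-sided bound the paper uses a direct Lipschitz estimate on $f$ where you chain Lemma~\ref{lem:simlengths} a second time through an auxiliary word $W$, but either route works.

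The one genuine gap is your first step. You assert that ``the standard pumping decomposition of paths in a finite automaton'' produces a \emph{finite} family $\mathcal{P}$ of templates whose instantiations exhaust $\mathcal{L}$. Pumping alone does not give this: the regular language $\{a,b\}^*$ admits no such finite description, because any finite union of sets of the form $U_0 V_1^* U_1 \cdots V_\alpha^* U_\alpha$ has polynomial growth. What makes the decomposition available here is that $(X,\mathcal{L})$ is finite-to-one over the polynomially growing group $\Z^N$, so $\mathcal{L}$ itself has polynomial growth, and regular languages of polynomial growth are \emph{simply starred} in precisely this sense---the paper invokes \cite[Proposition~1.3.8]{epstein} for this. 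You never mention polynomial growth, and without it the family $\mathcal{P}$ need not be finite, so your construction of $f$ as a maximum over $\mathcal{P}$ would not get off the ground. Once this ingredient is supplied, the rest of your argument is correct.
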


\begin{proof}
As $(X,\mathcal{L})$ is finite-to-one and as $\Z^N$ has polynomial growth, it follows that $\mathcal{L}$ has polynomial growth as well---that is, there exists a polynomial $g(x)$ such that for each $n \geq 0$ there are at most $g(n)$ words $U \in \mathcal{L}$ with $|U| \leq n$. It follows from \cite[Proposition~1.3.8]{epstein} that $\mathcal{L}$ is \emph{simply starred}: that is, there exist integers $\alpha_1,\ldots,\alpha_m \in \Z_{\geq 0}$ and words $U_{j,0},V_{j,1},U_{j,1},\ldots,V_{j,\alpha_j},U_{j,\alpha_j} \in X^*$ (for each $j \in \{1,\ldots,m\}$) such that
\begin{equation} \label{eq:langunion}
\mathcal{L} = \bigcup_{j=1}^m U_{j,0} \cdot (V_{j,1})^* \cdot U_{j,1} \cdot {} \cdots {} \cdot (V_{j,\alpha_j})^* \cdot U_{j,\alpha_j}.
\end{equation}

By Lemma~\ref{lem:indep}, for each $j$ the set $\{ \mathbf{z}_{j,1},\ldots,\mathbf{z}_{j,\alpha_j} \} \subseteq \R^N$, where $\mathbf{z}_{j,k} = \frac{\overline{V_{j,k}}}{|V_{j,k}|}$, is linearly independent. Thus, if $H_{j,k} \subseteq \R^N$ is the affine hyperplane $\{ \mathbf{v} \in \R^N \mid \langle \mathbf{z}_{j,k},\mathbf{v} \rangle = 1 \}$, then for $1 \leq j \leq m$ the intersection $\bigcap_{k=1}^{\alpha_j} H_{j,k}$ is non-empty, and so contains a point $\mathbf{y}_j \in \bigcap_{k=1}^{\alpha_j} H_{j,k}$. For each $j \in \{ 1,\ldots,m \}$, let $C_j \subseteq \R^N$ be the polyhedral cone over $\{ \mathbf{z}_{j,1},\ldots,\mathbf{z}_{j,\alpha_j} \}$; notice that we have $\langle \mathbf{z}_{j,k}, \mathbf{y}_j \rangle = 1 > 0$ for all $k$ by construction, and so we may define a function $f_{C_j,\mathbf{y}_j}\colon \R^N \to \R$ as in Definition~\ref{defn:polyhedral}. We then define $f\colon \R^N \to \R$ by setting
\[
f(\mathbf{v}) := \max \{ f_{C_j,\mathbf{y}_j}(\mathbf{v}) \mid 1 \leq j \leq m \}.
\]

We claim the following.
\begin{lem} \label{lem:polyhedral}
$f$ is a polyhedral function; in particular, in the above notation, conditions \ref{it:polyhedral-cover}, \ref{it:polyhedral-compat} and \ref{it:polyhedral-defn} in Definition~\ref{defn:polyhedral} are satisfied.
\end{lem}

We postpone the proof of Lemma~\ref{lem:polyhedral} until later, and finish the proof of Proposition~\ref{prop:biauto-poly} first.

We now define a few constants, as follows. We set $\delta := \max \left\{ \sum_{k=1}^{\alpha_j} |U_{j,k}| \,\middle|\, 1 \leq j \leq m \right\}$, $\zeta := \max \{ \|\mathbf{y}_j\| \mid 1 \leq j \leq m \}$, and $\eta := \max \{ \| \overline{x} \| \mid x \in X \}$. We then set
\[
\xi := \zeta\eta\delta + \delta.
\]

Notice that if $\mathbf{v},\mathbf{v}' \in C_j$ for some $j$ then condition~\ref{it:polyhedral-compat} in Definition~\ref{defn:polyhedral} implies that $f(\mathbf{v}) = \langle \mathbf{v},\mathbf{y}_j \rangle$ and $f(\mathbf{v}') = \langle \mathbf{v}',\mathbf{y}_j \rangle$, and therefore
\[
|f(\mathbf{v})-f(\mathbf{v}')| = |\langle \mathbf{v}-\mathbf{v}',\mathbf{y}_j \rangle| \leq \left\| \mathbf{v}-\mathbf{v}' \right\| \left\| \mathbf{y}_j \right\| \leq \zeta \left\| \mathbf{v}-\mathbf{v}' \right\|;
\]
this implies, by considering values of $f$ at some intermediate points on the geodesic connecting $\mathbf{v}$ and $\mathbf{v}'$, that in fact $|f(\mathbf{v})-f(\mathbf{v}')| \leq \zeta \left\| \mathbf{v}-\mathbf{v}' \right\|$ for \emph{any} $\mathbf{v},\mathbf{v}' \in \R^N$. In particular, it follows that if $\mathbf{v},\mathbf{v}' \in \Z^N$ then $|f(\mathbf{v})-f(\mathbf{v}')| \leq \zeta\eta|\mathbf{v}-\mathbf{v}'|_X$.

Now let $U \in \mathcal{L}$, so that by \eqref{eq:langunion} we have $U = U_{j,0} V_{j,1}^{\beta_1} U_{j,1} \cdots V_{j,\alpha_j}^{\beta_{\alpha_j}} U_{j,\alpha_j}$ for some $j$ and some $\beta_1,\ldots,\beta_{\alpha_j} \in \Z_{\geq 0}$. We set $\mathbf{v} := \sum_{k=1}^{\alpha_j} \beta_k \overline{V_{j,k}}$, so that we have $\overline{U}-\mathbf{v} = \sum_{k=0}^{\alpha_j} \overline{U_{j,k}}$, implying that
\[
\left| f(\overline{U}) - f(\mathbf{v}) \right| \leq \zeta\eta \left| \sum_{k=0}^{\alpha_j} \overline{U_{j,k}} \right|_X \leq \zeta\eta\sum_{k=0}^{\alpha_j} |U_{j,k}| \leq \zeta\eta\delta.
\]
On the other hand, we have $\mathbf{v} = \sum_{k=1}^{\alpha_j} \beta_k|V_{j,k}| \mathbf{z}_{j,k}$, and we can compute that
\[
f_{C_j,\mathbf{y}_j}(\mathbf{v}) = \langle \mathbf{v},\mathbf{y}_j \rangle = \sum_{k=1}^{\alpha_j} \beta_k|V_{j,k}| \langle \mathbf{z}_{j,k},\mathbf{y}_j \rangle = \sum_{k=1}^{\alpha_j} \beta_k|V_{j,k}|.
\]
Moreover, it follows from the condition~\ref{it:polyhedral-compat} in Definition~\ref{defn:polyhedral} that we have $f(\mathbf{v}) = f_{C_j,\mathbf{y}_j}(\mathbf{v})$, and therefore
\[
|U| = \sum_{k=0}^{\alpha_j} |U_{j,k}| + \sum_{k=1}^{\alpha_j} \beta_k |V_{j,k}| = \sum_{k=0}^{\alpha_j} |U_{j,k}| + f(\mathbf{v}).
\]
We thus have
\[
\left| |U| - f(\overline{U}) \right| \leq \big| |U| - f(\mathbf{v}) \big| + \left| f(\mathbf{v}) - f(\overline{U}) \right| \leq \left| \sum_{k=0}^{\alpha_j} |U_{j,k}| \right| + \zeta\eta\delta \leq \delta + \zeta\eta\delta = \xi,
\]
as required.
\end{proof}

We now prove Lemma~\ref{lem:polyhedral} that was stated in the proof of Proposition~\ref{prop:biauto-poly}.

\begin{proof}[Proof of Lemma~\ref{lem:polyhedral}]
The condition~\ref{it:polyhedral-defn} follows from the construction---so we only need to check \ref{it:polyhedral-cover} and \ref{it:polyhedral-compat}. In what follows, we set $\delta := \max \left\{ \sum_{k=0}^{\alpha_j} |U_{j,k}| \,\middle|\, 1 \leq j \leq m \right\}$. As $(X,\mathcal{L})$ is a finite-to-one biautomatic structure, by Lemma~\ref{lem:simlengths} there exists $\kappa \geq 0$ such that if $U,V \in \mathcal{L}$ are such that $\left| \overline{U}-\overline{V} \right|_X \leq 1$ then $|U|-\kappa \leq |V| \leq |U|+\kappa$.

In order to show \ref{it:polyhedral-cover}, suppose for contradiction that $D := \R^N \setminus \bigcup_{j=1}^m C_j$ is non-empty. Thus, as $D \subseteq \R^N$ is open and $\Q^N \subseteq \R^N$ is dense, we may pick $\mathbf{v} \in D \cap \Q^N$. Since $D$ is invariant under multiplication by $\mu$ for any $\mu > 0$, we may furthermore assume that $\mathbf{v} \in \Z^N$, and that if $\mathbf{w} \in \Z^N$ but $\mathbf{w} \notin D$ then $|\mathbf{v}-\mathbf{w}|_X > \delta$.

Now let $U \in \mathcal{L}$ be a word with $\overline{U} = \mathbf{v}$. By \eqref{eq:langunion}, we have $U = U_{j,0} V_{j,1}^{\beta_1} U_{j,1} \cdots V_{j,\alpha_j}^{\beta_{\alpha_j}} U_{j,\alpha_j}$ for some $j$ and some $\beta_1,\ldots,\beta_{\alpha_j} \in \Z_{\geq 0}$. However, we then have $\mathbf{w} := \sum_{k=1}^{\alpha_j} \beta_k \overline{V_{j,k}} = \sum_{k=1}^{\alpha_j} \beta_k |V_{j,k}| \mathbf{z}_{j,k} \in C_j$, but $|\mathbf{v}-\mathbf{w}|_X \leq \sum_{k=0}^{\alpha_j} |U_{j,k}| \leq \delta$, contradicting the choice of $\mathbf{v}$. Thus $\{ C_j \mid 1 \leq j \leq m \}$ must cover $\R^N$, which shows \ref{it:polyhedral-cover}.

In order to show \ref{it:polyhedral-compat}, let $j,k \in \{ 1,\ldots,m \}$. Since $\mathbf{z}_{j,\ell} \in \Q^N$ and $\mathbf{z}_{k,\ell} \in \Q^N$ for all $\ell$, we may express $C_j \cap C_k$ as the set of solutions of a system of linear inequalities with rational coefficients (see Theorem~\ref{thm:coneequiv}). In particular, it follows that any non-empty open subset of $C_j \cap C_k$ contains a point in $\Q^N$, and so $C_j \cap C_k$ is the closure of $C_j \cap C_k \cap \Q^N$ in $\R^N$. As the functions $\langle {-}, \mathbf{y}_j \rangle$ and $\langle {-}, \mathbf{y}_k \rangle$ are continuous, it is thus enough to verify \ref{it:polyhedral-compat} when $\mathbf{v} \in \Q^N$.

Thus, let $\mathbf{v} \in C_j \cap C_k \cap \Q^N$. Since $C_j$ and $C_k$ are invariant under multiplication by any $\mu > 0$, and since the functions $\langle {-}, \mathbf{y}_j \rangle$ and $\langle {-}, \mathbf{y}_k \rangle$ are linear, we may furthermore assume (after multiplying $\mathbf{v}$ by a positive integer if necessary) that $\mathbf{v} = \sum_{\ell=1}^{\alpha_j} \mu_{j,\ell} \mathbf{z}_{j,\ell} = \sum_{\ell=1}^{\alpha_k} \mu_{k,\ell} \mathbf{z}_{k,\ell}$ with $\mu_{j,\ell}/|V_{j,\ell}| \in \Z_{\geq 0}$ and $\mu_{k,\ell}/|V_{k,\ell}| \in \Z_{\geq 0}$ for all $\ell$. For any $\beta \in \Z_{\geq 0}$, we define the words
\begin{align*}
W_{\beta,j} &= U_{j,0} V_{j,1}^{\beta\mu_{j,1}/|V_{j,1}|} U_{j,1} \cdots V_{j,\alpha_j}^{\beta\mu_{j,\alpha_j}/|V_{j,\alpha_j}|} U_{j,\alpha_j} \in \mathcal{L}
\shortintertext{and}
W_{\beta,k} &= U_{k,0} V_{k,1}^{\beta\mu_{k,1}/|V_{k,1}|} U_{k,1} \cdots V_{k,\alpha_k}^{\beta\mu_{k,\alpha_k}/|V_{k,\alpha_k}|} U_{k,\alpha_k} \in \mathcal{L}.
\end{align*}

We then have
\[
\overline{W_{\beta,j}} = \sum_{\ell=0}^{\alpha_j} \overline{U_{j,\ell}} + \sum_{\ell=1}^{\alpha_j} \frac{\beta\mu_{j,\ell}}{|V_{j,\ell}|} \overline{V_{j,\ell}} = \sum_{\ell=0}^{\alpha_j} \overline{U_{j,\ell}} + \beta \sum_{\ell=1}^{\alpha_j} \mu_{j,\ell} \mathbf{z}_{j,\ell} = \sum_{\ell=0}^{\alpha_j} \overline{U_{j,\ell}} + \beta \mathbf{v},
\]
and similarly $\overline{W_{\beta,k}} = \sum_{\ell=0}^{\alpha_k} \overline{U_{k,\ell}} + \beta \mathbf{v}$. It follows that
\[
\left| \overline{W_{\beta,j}} - \overline{W_{\beta,k}} \right|_X \leq \sum_{\ell=0}^{\alpha_j} |U_{j,\ell}| + \sum_{\ell=0}^{\alpha_k} |U_{k,\ell}| \leq 2\delta,
\]
and so $\Big| |W_{\beta,j}| - |W_{\beta,k}| \Big| \leq 2\delta\kappa$.

On the other hand, we may compute that $|W_{\beta,j}| = \sum_{\ell=0}^{\alpha_j} |U_{j,\ell}| + \beta \sum_{\ell=1}^{\alpha_j} \mu_{j,\ell}$ and $|W_{\beta,k}| = \sum_{\ell=0}^{\alpha_k} |U_{k,\ell}| + \beta \sum_{\ell=1}^{\alpha_k} \mu_{k,\ell}$, implying that
\[
\beta \left| \sum_{\ell=1}^{\alpha_k} \mu_{k,\ell} - \sum_{\ell=1}^{\alpha_j} \mu_{j,\ell} \right| \leq \sum_{\ell=0}^{\alpha_j} |U_{j,\ell}| + \sum_{\ell=0}^{\alpha_k} |U_{k,\ell}| + \Big| |W_{\beta,j}| - |W_{\beta,k}| \Big| \leq 2\delta+2\delta\kappa;
\]
as $\beta$ can be chosen to be arbitrarily large, it follows that $\sum_{\ell=1}^{\alpha_j} \mu_{j,\ell} = \sum_{\ell=1}^{\alpha_k} \mu_{k,\ell}$. Finally, we get
\[
\langle \mathbf{v},\mathbf{y}_j \rangle = \sum_{\ell=1}^{\alpha_j} \mu_{j,\ell} \langle \mathbf{z}_{j,\ell},\mathbf{y}_j \rangle = \sum_{\ell=1}^{\alpha_j} \mu_{j,\ell} = \sum_{\ell=1}^{\alpha_k} \mu_{k,\ell} = \sum_{\ell=1}^{\alpha_k} \mu_{k,\ell} \langle \mathbf{z}_{k,\ell},\mathbf{y}_k \rangle = \langle \mathbf{v},\mathbf{y}_k \rangle.
\]
This proves \ref{it:polyhedral-compat}.
\end{proof}

\begin{rmk} \label{rmk:neumann-shapiro}
Our construction is related to the Neumann--Shapiro triangulation of $\Sb^{N-1}$ associated to a biautomatic structure on $\Z^N$ \cite{neumann-shapiro}. Namely, let $C_1,\ldots,C_m \subseteq \R^N$ be the polyhedral cones constructed in the proof of Proposition~\ref{prop:biauto-poly}. Some subset of these cones---which we get after discarding cones contained in either a proper subspace of $\R^N$ or another cone---is precisely the set of $(N-1)$-simplices in the relevant Neumann--Shapiro triangulation of $\Sb^{N-1}$. We may furthermore order vertices in each of these polyhedral cones, with the ordering induced by the order $\mathbf{z}_{j,1} \prec \cdots \prec \mathbf{z}_{j,\alpha_j}$ on $C_j$, thus recovering the complete structure of the triangulations exhibited in \cite{neumann-shapiro}.

However, we amend this triangulation by constructing a polyhedral function, associating to $(X,\mathcal{L})$ a geometric rather than combinatorial structure. This allows easier treatment of arbitrary subgroups of $\Z^N$. In particular, even though a triangulation of $\Sb^{N-1}$ does not necessarily induce a triangulation on an arbitrary equatorial subsphere $\Sb^{n-1} \subset \Sb^{N-1}$ for $n < N$, composing a polyhedral function with an arbitrary isometric linear inclusion $\R^n \hookrightarrow \R^N$ still yields a polyhedral function by Lemma~\ref{lem:polyhedralsubsp}.
\end{rmk}

\begin{ex} \label{ex:biauto}
Let $X = \{ \varepsilon,x,y,x^{-1},y^{-1} \}$ be the generating set of $\Z^2$ such that $x$, $y$ and $\varepsilon$ map to $(1,0)$, $(0,1)$ and $(0,0)$, respectively. Define the language $\mathcal{L}$ as follows:
\begin{align*}
\mathcal{L} := \left(\varepsilon xy^2\right)^* \left(\varepsilon y\right)^* &\cup \left(\varepsilon xy^2\right)^* \left(\varepsilon^3 x\right)^* \cup \left(\varepsilon xy^2\right)^* y^{-1} \left(\varepsilon^3 x\right)^* \\ &{} \cup \left(\varepsilon^3 x\right)^* \left(\varepsilon y^{-1}\right)^* \cup \left(\varepsilon x^{-1}\right)^* \left(\varepsilon y^{-1}\right)^* \cup \left(\varepsilon y\right)^* \left(\varepsilon x^{-1}\right)^*.
\end{align*}
One may then check that $(X,\mathcal{L})$ is indeed a finite-to-one biautomatic structure for $\Z^2$. Moreover, in this case the polyhedral function constructed in the proof of Proposition~\ref{prop:biauto-poly} is precisely the function $f$ depicted in Figure~\ref{fig:poly}, and one may check that the values of $\mathbf{y}_j$ and $\mathbf{z}_{j,k}$ indicated in Example~\ref{ex:intro} are consistent with the notation used in the proof of Proposition~\ref{prop:biauto-poly} for the language $\mathcal{L}$. The thin black lines in Figure~\ref{fig:poly} represent the paths starting at $(0,0)$ and labelled by words in $\mathcal{L}$.

Proposition~\ref{prop:biauto-poly} then implies that given any $n \in \Z_{\geq 0}$, if $S_n \subset \Z^2$ is the set of elements represented by words in $\mathcal{L}$ of length $n$, then $S_n$ is bounded distance away from an appropriate rescaling of the dotted line in Figure~\ref{fig:poly}, for some bound that is independent of~$n$.
\end{ex}

\section{The proof of Theorem~\ref{thm:main}} \label{sec:proof}

Finally, we use Lemma~\ref{lem:polyhedralsubsp} and Propositions~\ref{prop:finite}~\&~\ref{prop:biauto-poly} to prove Theorem~\ref{thm:main}. The idea of our proof is to embed $H$ (or a finite-index torsion-free subgroup of $H$) into an $\mathcal{M}$-quasiconvex free abelian subgroup $\widehat{H} \leq G$, where $(Y,\mathcal{M})$ is a biautomatic structure of $G$. We then associate a polyhedral function to $\widehat{H}$, as per Proposition~\ref{prop:biauto-poly}, and Lemma~\ref{lem:polyhedralsubsp} allows us to restrict this function to a polyhedral function associated to $H$. The following result then implies that the latter polyhedral function is (in a sense) $K$-invariant, where $K$ is the image of $\Comm_G(H)$ in $\Comm(H)$, and consequently $K$ is finite by Proposition~\ref{prop:finite}.

\begin{prop} \label{prop:polyhedralaction}
Let $G$ be a group with a finite-to-one biautomatic structure $(Y,\mathcal{M})$, let $\widehat{H} \leq G$ be a free abelian $\mathcal{M}$-quasiconvex subgroup of rank $N \geq 0$, and let $\varphi\colon \widehat{H} \to \R^N$ be an embedding with $\varphi(\widehat{H}) = \Z^N$. Then there exists a polyhedral function $f\colon \R^N \to \R$ such that $f \circ \varphi(h) = f \circ \varphi(ghg^{-1})$ for all $g \in G$ and all $h \in \widehat{H} \cap g^{-1}\widehat{H}g$.
\end{prop}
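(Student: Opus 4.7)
The plan is to apply Proposition~\ref{prop:biauto-poly} to an appropriate biautomatic structure on $\widehat H$, then to exploit Lemma~\ref{lem:simlengths} along a geodesic word for $g$ to show that the resulting polyhedral function is invariant under the claimed conjugation. Since $\widehat H$ is $\mathcal{M}$-quasiconvex in $G$, Theorem~\ref{thm:qconv-biauto} supplies a finite-to-one biautomatic structure $(X,\mathcal{L})$ on $\widehat H$ such that for every $\tilde U\in\mathcal{M}$ with $\overline{\tilde U}\in\widehat H$ there exists $U\in\mathcal{L}$ with $\overline{U}=\overline{\tilde U}$ and $|U|=|\tilde U|$. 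Transporting $(X,\mathcal{L})$ to a biautomatic structure on $\Z^N$ via $\varphi$ and applying Proposition~\ref{prop:biauto-poly}, we obtain a polyhedral function $f\colon\R^N\to\R$ and a constant $\xi\geq 0$ with
\[
\bigl|\,|U|-f(\varphi(\overline{U}))\,\bigr|\leq \xi \qquad\text{for every } U\in\mathcal{L};
\]
we claim this $f$ has the required conjugation-invariance property.

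Fix $g\in G$ and $h\in\widehat H\cap g^{-1}\widehat Hg$, and set $h':=ghg^{-1}\in\widehat H$. For each $n\geq 1$ both $h^n$ and $gh^ng^{-1}=(h')^n$ lie in $\widehat H$; pick any $\mathcal{M}$-words $\tilde U_n,\tilde V_n\in\mathcal{M}$ representing them, and use Theorem~\ref{thm:qconv-biauto} to produce $U_n,V_n\in\mathcal{L}$ with $\overline{U_n}=h^n$, $\overline{V_n}=gh^ng^{-1}$, $|U_n|=|\tilde U_n|$ and $|V_n|=|\tilde V_n|$. By positive homogeneity of $f$, the polyhedral estimate above then gives
\[
\bigl|\,|U_n|-nf(\varphi(h))\,\bigr|\leq\xi \qquad\text{and}\qquad \bigl|\,|V_n|-nf(\varphi(h'))\,\bigr|\leq\xi.
\]
It therefore suffices to show that $\bigl|\,|U_n|-|V_n|\,\bigr|$ stays bounded as $n\to\infty$: dividing by $n$ and letting $n\to\infty$ in the triangle inequality then forces $f(\varphi(h))=f(\varphi(ghg^{-1}))$, as required.

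To produce such a bound, we interpolate between $h^n$ and $gh^ng^{-1}$ by a chain of conjugations by single generators. Writing $g=y_1y_2\cdots y_k\in Y^*$ with $k=|g|_Y$, set $u_i:=y_{i+1}y_{i+2}\cdots y_k$ and $z_{i,n}:=u_ih^nu_i^{-1}$ for $0\leq i\leq k$; then $z_{0,n}=gh^ng^{-1}$, $z_{k,n}=h^n$, and $z_{i-1,n}=y_iz_{i,n}y_i^{-1}$ for $1\leq i\leq k$. Choose $W_{i,n}\in\mathcal{M}$ representing $z_{i,n}$ with $W_{0,n}=\tilde V_n$ and $W_{k,n}=\tilde U_n$. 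Since $Y$ is symmetric and $\overline{W_{i-1,n}}=\overline{y_iW_{i,n}y_i^{-1}}$, Lemma~\ref{lem:simlengths} yields $\bigl|\,|W_{i-1,n}|-|W_{i,n}|\,\bigr|\leq\kappa$, and telescoping over $i=1,\ldots,k$ gives $\bigl|\,|U_n|-|V_n|\,\bigr|=\bigl|\,|\tilde U_n|-|\tilde V_n|\,\bigr|\leq k\kappa=|g|_Y\kappa$, independent of $n$. The principal obstacle is exactly this interpolation step: Lemma~\ref{lem:simlengths} compares $\mathcal{M}$-words only for group elements differing by a single generator on each side, so one is forced to walk along a geodesic word for $g$ in the Cayley graph and accumulate the $k$ resulting $\kappa$-errors. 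The constant produced is linear in $|g|_Y$ but constant in $n$, which is precisely what the asymptotic argument needs.
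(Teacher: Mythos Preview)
Your proof is correct and follows essentially the same approach as the paper: obtain the induced finite-to-one biautomatic structure on $\widehat H$ via Theorem~\ref{thm:qconv-biauto}, apply Proposition~\ref{prop:biauto-poly} to get the polyhedral function $f$, and then use Lemma~\ref{lem:simlengths} along a geodesic for $g$ to bound $\bigl||U_n|-|V_n|\bigr|$ by $\kappa|g|_Y$, concluding by homogeneity of $f$. The only difference is cosmetic: the paper states the bound $\bigl||U_\beta'|-|V_\beta'|\bigr|\leq\kappa|g|_Y$ in one line, whereas you spell out the interpolating chain $z_{i,n}=u_ih^nu_i^{-1}$ explicitly.
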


\begin{proof}
As $(Y,\mathcal{M})$ is a finite-to-one biautomatic structure on $G$, by Lemma~\ref{lem:simlengths} there exists a constant $\kappa \geq 0$ such that if $U,V \in \mathcal{M}$ are such that $\overline{U} = \overline{y^{-1}Vy}$ for some $y \in Y$, then $\big| |U|-|V| \big| \leq \kappa$. Since $\widehat{H}$ is $\mathcal{M}$-quasiconvex, by Theorem~\ref{thm:qconv-biauto} there exists a finite-to-one biautomatic structure $(X,\mathcal{L})$ on $\widehat{H}$ such that for any $V \in \mathcal{M}$ with $\overline{V} \in \widehat{H}$, there exists $U \in \mathcal{L}$  with $\overline{U} = \overline{V}$ and $|U| = |V|$. By identifying the subgroup $\widehat{H}$ with $\Z^N$ via $\varphi$, let $f\colon \R^N \to \R$ and $\xi \geq 0$ be the polyhedral function and the constant given by Proposition~\ref{prop:biauto-poly}.

Now let $g \in G$ and let $h \in \widehat{H} \cap g^{-1}\widehat{H}g$, so that $h^\beta,gh^\beta g^{-1} \in \widehat{H}$ for all $\beta \in \Z_{\geq 0}$; without loss of generality, assume that $g \neq 1$. For each $\beta \in \Z_{\geq 0}$, let $U_\beta,V_\beta \in \mathcal{L}$ and $U_\beta',V_\beta' \in \mathcal{M}$ be such that $\overline{U_\beta} = \overline{U_\beta'} = h^\beta$, $\overline{V_\beta} = \overline{V_\beta'} = gh^\beta g^{-1}$, $|U_\beta| = |U_\beta'|$ and $|V_\beta| = |V_\beta'|$. It then follows from Proposition~\ref{prop:biauto-poly} that $\big| f \circ \varphi(h^\beta) - |U_\beta| \big| \leq \xi$ and $\big| f \circ \varphi(gh^\beta g^{-1}) - |V_\beta| \big| \leq \xi$. Moreover, by the choice of the constant $\kappa \geq 0$ above, we have $\big| |U_\beta|-|V_\beta| \big| = \big| |U_\beta'|-|V_\beta'| \big| \leq \kappa|g|_Y$. Finally, it follows from Definition~\ref{defn:polyhedral} that $f(\beta \mathbf{x}) = \beta f(\mathbf{x})$ for all $\beta \in \Z_{\geq 0}$ and $\mathbf{x} \in \R^n$, and therefore
\begin{align*}
&\beta \left| f \circ \varphi(h) - f \circ \varphi(ghg^{-1}) \right| = \left| f \circ \varphi(h^\beta) - f \circ \varphi(gh^\beta g^{-1}) \right| \\ &\qquad\qquad{} \leq \left| f \circ \varphi(h^\beta) - |U_\beta| \right| + \big| |U_\beta|-|V_\beta| \big| + \left| |V_\beta| - f \circ \varphi(gh^\beta g^{-1}) \right| \\ &\qquad\qquad{} \leq 2\xi + \kappa|g|_Y.
\end{align*}
As $\beta \in \Z_{\geq 0}$ was arbitrary, it follows that $f \circ \varphi(h) = f \circ \varphi(ghg^{-1})$, as required.
\end{proof}

\begin{proof}[Proof of Theorem~\ref{thm:main}]
Since $H$ is finitely generated abelian, it has a torsion-free subgroup $H'$ of finite index. By Lemma~\ref{lem:commfi}, it is enough to show that the image of $\Comm_G(H')$ in $\Comm(H')$ is finite. Therefore, we will assume (without loss of generality) that $H$ is torsion-free.

Now let $(Y,\mathcal{M})$ be a biautomatic structure for $G$. Since $H$ is finitely generated, its centraliser $C_G(H)$ is $\mathcal{M}$-quasiconvex by Proposition~\ref{prop:centr-qconv}; let $(Y',\mathcal{M}')$ be the associated biautomatic structure for $C_G(H)$. In particular, $C_G(H)$ is finitely generated and so its centre, $Z(C_G(H))$, is $\mathcal{M}'$-quasiconvex in $C_G(H)$ (again by Proposition~\ref{prop:centr-qconv}), and so $\mathcal{M}$-quasiconvex in $G$ (by Lemma~\ref{lem:qconv-trans}). Let $(Y'',\mathcal{M}'')$ be the associated biautomatic structure for $Z(C_G(H))$.

Now $Z(C_G(H))$ is a finitely generated abelian group containing $H$, and so $Z(C_G(H))/H$ is a finitely generated abelian group. Thus $Z(C_G(H))/H$ has a finite-index torsion-free subgroup, say $\widehat{H}/H$. Its preimage $\widehat{H}$ is a finite-index abelian subgroup of $Z(C_G(H))$ containing $H$, and as $H$ is torsion-free so is $\widehat{H}$. Moreover, as $\widehat{H}$ has finite index in $Z(C_G(H))$, it is $\mathcal{M}''$-quasiconvex, and so by Lemma~\ref{lem:qconv-trans} it is $\mathcal{M}$-quasiconvex in $G$. Thus, $\widehat{H} \leq G$ is an $\mathcal{M}$-quasiconvex free abelian subgroup of finite rank ($N$, say) containing $H$.

Now let $\widehat{\varphi}\colon \widehat{H} \to \R^N$ be an embedding such that $\widehat{\varphi}(\widehat{H}) = \Z^N$. By identifying the subspace of $\R^N$ spanned by $\widehat{\varphi}(H)$ with $\R^n$ via a linear isometry, we see that $\widehat{\varphi}|_H = \theta \circ \varphi$, where $\theta\colon \R^n \to \R^N$ is a linear isometric embedding, and $\varphi\colon H \to \R^n$ is an embedding as a lattice. Given an element $g \in \Comm_G(H)$, we may define a matrix $A_g \in GL_n(\R)$ such that $\varphi(ghg^{-1}) = A_g\varphi(h)$ for all $h \in H \cap g^{-1}Hg$; such a matrix is unique since $H \cap g^{-1}Hg$ has finite index in $H$ and so $\varphi(H \cap g^{-1}Hg)$ is a lattice in $\R^n$. This defines a map
\begin{align*}
\Theta\colon \Comm_G(H) &\to GL_n(\R), \\ g &\mapsto A_g,
\end{align*}
which is easily seen to be a homomorphism---in fact, the map $\Theta$ is just a composite $\Comm_G(H) \to \Comm(H) \cong GL(V) \hookrightarrow GL_n(\R)$, where $V = \varphi(H) \otimes \Q < \R^n$ is an $n$-dimensional $\Q$-vector subspace.

Now by Proposition~\ref{prop:polyhedralaction}, there exists a polyhedral function $\widehat{f}\colon \R^N \to \R$ such that $\widehat{f} \circ \widehat{\varphi}(h) = \widehat{f} \circ \widehat{\varphi}(ghg^{-1})$ for all $g \in G$ and all $h \in \widehat{H} \cap g^{-1}\widehat{H}g$. By Lemma~\ref{lem:polyhedralsubsp}, the function $f = \widehat{f} \circ \theta\colon \R^n \to \R$ is also polyhedral. Now fix $g \in \Comm_G(H)$. Then, for any $h \in H \cap g^{-1}Hg$ we have
\begin{align*}
f(A_g \varphi(h)) &= f \circ \varphi(ghg^{-1}) = \widehat{f} \circ \theta \circ \varphi(ghg^{-1}) = \widehat{f} \circ \widehat{\varphi}(ghg^{-1}) \\ &= \widehat{f} \circ \widehat{\varphi}(h) = \widehat{f} \circ \theta \circ \varphi(h) = f(\varphi(h)),
\end{align*}
and so $f(A_g\mathbf{v}) = f(\mathbf{v})$ for all $\mathbf{v} \in \varphi(H \cap g^{-1}Hg)$. As $f$ is polyhedral, we have $f(\beta \mathbf{v}) = \beta f(\mathbf{v})$ for all $\beta \in [0,\infty)$ and all $\mathbf{v} \in \R^n$, implying that $f(A_g\mathbf{v}) = f(\mathbf{v})$ for all $\mathbf{v} \in K$, where $K := \{ \beta \mathbf{w} \mid \beta \in [0,\infty), \mathbf{w} \in \varphi(H \cap g^{-1}Hg) \}$. As $\varphi(H \cap g^{-1}Hg)$ is a lattice in $\R^n$, the subset $K \subseteq \R^n$ is dense; and as $f$ is polyhedral, it is continuous, implying that $f(A_g\mathbf{v}) = f(\mathbf{v})$ for all $\mathbf{v} \in \R^n$.

Thus, we have $f(A\mathbf{v}) = f(\mathbf{v})$ for all $A \in \Theta(\Comm_G(H))$ and all $\mathbf{v} \in \R^n$. It follows from Proposition~\ref{prop:finite} that $\Theta(\Comm_G(H))$ is finite. Since $\Theta$ factors as a composite $\Comm_G(H) \to \Comm(H) \hookrightarrow GL_n(\R)$, it follows that $\Comm_G(H)$ has finite image in $\Comm(H)$, as required.

The `in particular' part of the Theorem follows directly from the definition of the map $\Comm_G(H) \to \Comm(H)$: indeed, $g \in \Comm_G(H)$ is in the kernel of this map if and only if it centralises a finite-index subgroup of $H$.
\end{proof}

\section*{Data availability}

There is no data associated with the manuscript.

\bibliographystyle{amsalpha}
\bibliography{../../all}

\end{document}